\newenvironment{absolutelynopagebreak}
{\par\nobreak\vfil\penalty0\vfilneg
	\vtop\bgroup}
{\par\xdef\tpd{\the\prevdepth}\egroup
	\prevdepth=\tpd}
\newtheorem{theorem}{Theorem}
\numberwithin{theorem}{section}
\numberwithin{equation}{section}
\newtheorem{lemma}[theorem]{Lemma}
\newtheorem{example}[theorem]{Example}
\newcommand{\D}{\mathcal{D}}
\newcommand{\p}{\partial}
\newcommand{\G}{\Gamma}
\newcommand{\Dx}{\mathcal{D}_X}
\newcommand{\Dl}{\mathcal{D}_\lambda}
\newcommand{\Ol}{\mathcal{O}_\lambda}
\newcommand{\Oll}{\mathcal{O}_{\lambda-1}}
\newcommand{\Tx}{\Theta_X}
\newcommand{\Cx}{\Omega^1_X}
\newcommand{\Ox}{{\mathcal{O}_X}}
\newcommand{\Oc}{{\mathcal{O}}}
\newcommand{\M}{\mathcal{M}}
\newcommand{\N}{\mathcal{N}}
\newcommand*{\End}[1][\Ox]{\mathcal{E}nd_{\mathbb{C}}(#1)}
\newcommand{\Hom}{\mathcal{H}om}
\newcommand{\gr}{\operatorname{gr}^F}
\newcommand{\DR}{\operatorname{DR}}
\newcommand{\GL}{\operatorname{GL}}
\newcommand{\slt}{\mathfrak{sl}_2}
\newcommand{\Ll}{\mathcal{L}}
\newcommand{\E}{\mathcal{E}}
\newcommand{\ins}{{U_0 \cap U_\infty}}
\newcolumntype{C}[1]{>{\centering\arraybackslash}p{#1}}
\newcommand{\g}{\mathfrak{g}}
\newcommand{\h}{\mathfrak{h}}
\newcommand{\bigstar}[1][0pt]{
	\mathrel{\raisebox{#1}{\text{\Huge $*$}}}
}
\title[Lie algebra representations from regular holonomic $\mathcal{D}$-modules]{An investigation into Lie algebra representations obtained from regular holonomic $\mathcal{D}$-modules}
\date{\today}
\author{Julian Wykowski}
\author{Travis Schedler}
\definecolor{imperiallight}{RGB}{24,142,179}
\definecolor{grn}{RGB}{68, 179, 24}
\begin{document}
	\maketitle
	\begin{abstract}
		Beilinson--Bernstein localisation \cite{BBL} relates representations of a Lie algebra $\g$ to certain $\D$-modules on the flag variety of $\g$. In \cite{romanov}, examples of $\slt$-representations which correspond to $\D$-modules on $\mathbb{CP}^
		1$ were computed. In this expository article, we give a topological description of these and extended examples via the Riemann--Hilbert correspondence. We generalise this to a full characterisation of $\slt$-representations which correspond to holonomic $\D$-modules on $\mathbb{CP}^1$ with at most 2 regular singularities. We construct further examples with more singularities and develop a computer program for the computation of this correspondence in more general cases.
	\end{abstract}
	\section{Introduction}
	The notion of $\D$-modules has roots in the algebraic study of solutions to linear partial differential equations on a manifold, but has ever since risen to prominence throughout algebraic disciplines, often bridging otherwise distinct fields. The idea originates\footnote{$\D$-modules were first discovered by Mikio Sato, in his work on algebraic analysis. We refer the reader to, e.g., \cite{hotta} for more on this topic, and for a more detailed treatment of anything mentioned in this terse introduction. There are many more concise introductions, e.g.,  \cite{elliott}.}
	in the study of solutions to linear partial differential equations. Indeed, given an open subset $X \subseteq \mathbb{C}^n$ with coordinate ring $\Oc$, an equation of the form $Py = 0$---where $P$ is an element of the $\mathbb{C}$-algebra $D$ of algebraic differential operators with coefficients in $\Ox$---has an associated $D$-module $M = D / DP$. Note that $\mathcal{O}$ is also a $D$-module. We observe that \begin{align*}
		\operatorname{Hom}_D(D/DP, \Oc) &\cong \{\varphi \in \operatorname{Hom}_D(D, \Oc) : \varphi(P) = 0) \\
		&\cong \{f \in \Oc : P(f) = 0\}\}
	\end{align*}
	so that $\operatorname{Hom}_D(M, \Oc)$ describes the vector space of solutions to $Px = 0$ in $X$. This allows us to transport questions in the study of differential equations to a rather algebraic setting---and as we shall see, even the realms of topology or the representation-theory of Lie algebras.
	\subsection*{Algebraic $\D$-modules}
	Formally, fix a smooth algebraic variety $X$ over $k = \mathbb{C}$. Denote by $\Ox$ the structure sheaf on $X$ and by $\End$ the sheaf of $\mathbb{C}$-linear endomorphisms of $\Ox$ (a sheaf of $\mathbb{C}$-algebras), into which $\Ox$ embeds as $f \mapsto f \cdot -$ where $\cdot$ stands for point-wise multiplication. We then define the tangent sheaf $\Tx$ on $X$ as the subsheaf of $\End$ consisting of those elements that satisfy the Leibniz rule: $$\Tx = \{ \xi \in \End : \xi(fg) = \xi(f)g + f\xi(g)\}$$ and its dual, the cotangent sheaf $\Cx = \Hom_\Ox(\Tx, \Ox)$ on $X$, which we shall consider as a complex vector space. Its exterior powers form the de Rham complex $\Omega^\bullet$:
	\begin{equation}\label{complex}
		0 \xrightarrow{\,} \Omega^0(X) \xrightarrow{d} \Omega^1(X) \xrightarrow{d} \Omega^2(X) \xrightarrow{d} \Omega^3(X) \xrightarrow{d} \ldots 
	\end{equation} where $d$ denotes the exterior derivative and we take $\Omega^0(X) = C^\infty(X, \mathbb{C})$. Now, the \textit{sheaf of differential operators} $\Dx$ is the subsheaf of $\mathcal{E}nd_{\mathbb{C}}(\Ox)$ generated by $\Ox$ and $\Tx$, and a (left/right) \textit{$\D$-module} $\M$ on $X$ is simply a sheaf of (left/right) $\Dx$-modules. We will denote the category of left $\Dx$-modules as $\operatorname{Mod}(\Dx)$, and dually, the category of right $\Dx$-modules as $\operatorname{Mod}(\Dx)^{\operatorname{op}}$. Their (bounded) derived categories will be denoted as $D^b(\Dx)$ and $D^b(\Dx^{\operatorname{op}})$, respectively.
	
	Note that every point $x \in X$ has an open affine neighbourhood $U \subseteq X$ with local coordinates $x_1, \ldots, x_n \in \G(U, \Ox)$ such that $\G(U, \Tx)$ is a free $\G(U, \Ox)$-module with basis $\{\partial_1, \ldots, \partial_n\}$ where $[\p_i, \p_j] = 0$ and $[\p_i, x_j] = \delta_{ij}$. In particular, we have:
	\begin{example} \label{affine}
		On the affine space $X = \mathbb{A}^n$, $$\G(X,\Dx) = \mathbb{C}[x_1, \ldots, x_n ; \p_1, \ldots, \p_n]$$ is the Weyl algebra on $n$ variables. 
	\end{example}
	This description of the affine case suggests a natural (exhaustive increasing) filtration on $\Dx$ by order of differential operators. Indeed, in local coordinates $\{x_i\p_i\}$ on affine open $U \subseteq X$ (as described above), we have $$\Gamma(U, \Dx) = \bigoplus_{\alpha \in \mathbb{N}^n} \Oc_U \partial^\alpha$$ where $\partial^\alpha = \partial_1^{\alpha_1} \ldots \partial_n^{\alpha_n}$ in multi-index notation. We can extrapolate to a global filtration $F$ on $\Dx$ by writing $$F_n\Dx = \{P \in \Dx : P|_U \in \bigoplus_{|\alpha| \leq n} \Oc_U \p^\alpha \quad \forall \, U \subseteq X \text{ open affine}\}$$ for all $n \in \mathbb{N}$. The associated graded sheaf of rings $$\gr(\Dx) = \bigoplus_{n \in \mathbb{N}^+}  \frac{F_n\Dx}{F_{n-1}\Dx}$$ is locally $\mathcal{O}_X$-linearly spanned  by (homogeneous) elements of the form $\partial^\alpha$, which we may also regard as local functions on the total space of the cotangent bundle.
	%In fact, there is a canonical isomorphism between the sheaf of functions on the cotangent bundle, pushed forward to $X$, and the associated graded sheaf $\gr(\Dx)$.  
	%$ + \Oc_U$, called principal symbols, 
%	which we may regard as a basis for the functions on the cotangent space on $U$. Globally, this corresponds corresponds to the isomorphism 
In fact this comes from a canonical isomorphism $\gr(\Dx) \cong \pi_*(\Oc_{T^*X})$ where ${\pi : T^*X \to X}$ denotes the projection. The inverse is defined by viewing a fibrewise homogeneous local function on $T^*X \to X$ as a polynomial in vector fields, hence defining a differential operator, uniquely determined modulo differential operators of lower degree.
%sending an element of $\gr^F_n(\Dx)$ to its \emph{principal symbol}, which is 
%homogeneous polynomial in vector fields which 
%cotangent bundle.
	
	It can be shown that a $\Dx$-module $\M$ is $\Dx$-coherent\footnote{Given a sheaf of locally Noetherian rings $\mathcal{F}$ on a space $X$, we say that an $\mathcal{F}$-module $\mathcal{M}$ is \textit{quasi-coherent} over $\mathcal{F}$ if any $x \in X$ lies in an open neighbourhood $U$ with exact sequence $$\Gamma(U,\mathcal{F}^{\oplus I}) \to \Gamma(U,\mathcal{F}^{\oplus J}) \to \Gamma(U,\mathcal{M}) \to 0$$ for some index sets $I$ and $J$. If, moreover, $I$ and $J$ can be chosen finite then $\mathcal{M}$ is $\mathcal{F}$-\textit{coherent}.} if and only if it admits an (exhaustive increasing) filtration $F_\bullet$ which agrees with that on $\Dx$, i.e. such that $$(F_n\Dx)(F_m \M) \subseteq F_{n+m}\M$$ holds. Let $\M$ be such a $\Dx$-coherent $\Dx$-module. The associated sheaf of graded modules $\gr(\M)$ is then a module over the sheaf of rings $\gr(\Dx) \cong \pi_*(\Oc_{T^*X})$ which allows us to turn $\gr(\M)$ into an $\Oc_{T^*X}$-module via $$\overline{\gr\M} = \Oc_{T^*X} \otimes_{\pi^{-1}\pi_*\Oc_{T^*X}} \pi^{-1}(\gr \M)$$
	whose support cuts out a sub-variety of $T^*X$ which we call the \textit{characteristic variety} $\operatorname{Ch}(\M)$ of $\M$, and which can be shown independent of choice of compatible filtration. It is a result of Bernstein (cf. \cite{Bernstein}) that $\dim(\operatorname{Ch}(\M)) \geq \dim(X)$. We call $\M$ \textit{holonomic} if equality holds and write $\operatorname{Mod}_h(\Dx)$ for the category of holonomic $\Dx$-modules, as well as $D^b_h(\Dx)$ for its bounded derived category. 
	
	Morally, the characteristic variety describes the solution space of principal symbols in $T^*X$. In case of the motivating example $\M = \Dx / I $ for $I \triangleleft \Dx$ some ideal (corresponding to a system of differential equations), loosely speaking, the size of the solution space of $I$ decreases as the size to the size of $I$ increases, and hence, as the dimension of $\operatorname{Ch(M)}$ decreases. We can think of $\M$ being holonomic roughly when its solution space is as small as possible, i.e. $I$ is a maximally over-determined system of differential equations.
	
	Holonomic can be seen as a step in between $\Dx$-coherence and $\Ox$-coherence. Indeed, if $\M$ is an $\Ox$-coherent $\Dx$-module, then the filtration $F_n\M = \M$ must agree with that on $\Dx$ and we find that locally, $\operatorname{gr}^F \M \cong \M \cong \Ox^r$. Moreover, $\Tx$ then annihilates $\operatorname{gr}^F \M$ as a $\pi_* \Oc_{T^*X}$-sheaf, so indeed $\operatorname{Ch}(\M) = T^*_XX$ is the zero section, whose dimension is $\dim(X)$. Thus any $\Ox$-coherent $\Dx$-module is automatically holonomic.
	
	It can be shown that submodules and quotients of holonomic modules are themselves holonomic. Furthermore, the category of holonomic modules comes equipped with a duality functor called the \textit{Verdier} duality; we skip its definition here and refer the curious reader to Chapter 4.5 of \cite{hotta} instead. One notable consequence of this duality is that a holonomic $\D_X$-module is Artinian if and only if its dual is Noetherian. In the case that $X$ is a quasi-projective variety, any coherent $\D_X$-module is Noetherian (as so is $\Oc_{T^*X}$).
	
	\begin{lemma}\label{Artinian}
		The category of holonomic $\Dx$-modules is Artinian.
	\end{lemma}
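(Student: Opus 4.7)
The strategy is to bootstrap directly from the two facts recalled in the paragraph preceding the lemma: first, Verdier duality $\mathbb{D}$ is an involutive, contravariant, exact autoequivalence on the category of holonomic $\D_X$-modules, which interchanges the Artinian and Noetherian conditions; second, any coherent $\D_X$-module on a quasi-projective variety is Noetherian (which applies here since the paper's focus is $X = \mathbb{CP}^1$). Holonomicity entails coherence by construction, so every holonomic $\M$ is already Noetherian.

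To show the category is Artinian, I would fix a holonomic $\M$ together with a descending chain of submodules $\M \supseteq \M_1 \supseteq \M_2 \supseteq \cdots$. Each $\M_i$ is again holonomic by the stability of holonomicity under subobjects noted earlier. Applying $\mathbb{D}$ to the inclusions, contravariance and exactness yield a chain of surjections
$$\mathbb{D}\M \twoheadrightarrow \mathbb{D}\M_1 \twoheadrightarrow \mathbb{D}\M_2 \twoheadrightarrow \cdots,$$
whose kernels $K_i := \ker(\mathbb{D}\M \twoheadrightarrow \mathbb{D}\M_i)$ form an ascending chain inside $\mathbb{D}\M$.

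Since $\mathbb{D}\M$ is itself holonomic and therefore Noetherian, the chain $K_1 \subseteq K_2 \subseteq \cdots$ must stabilise; consequently the induced surjections $\mathbb{D}\M_N \twoheadrightarrow \mathbb{D}\M_{N+j}$ are isomorphisms for large $N$ and all $j \geq 0$. Applying $\mathbb{D}$ one more time and invoking the natural isomorphism $\mathbb{D}^2 \cong \operatorname{id}$ on holonomic modules translates this back into $\M_N = \M_{N+j}$, so the original chain terminates.

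The principal conceptual step—rather than any serious technical obstacle—is the Artinian/Noetherian flip effected by $\mathbb{D}$, whose proof is black-boxed in the reference to Chapter 4.5 of \cite{hotta}. Once Verdier duality and the Noetherian property of coherent $\D_X$-modules are granted, the argument is purely formal, so I do not anticipate genuine computational difficulty.
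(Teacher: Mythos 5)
Your argument is correct and follows exactly the route the paper sketches in the paragraph preceding the lemma: holonomic modules are coherent, hence Noetherian on a quasi-projective variety, and Verdier duality is an exact contravariant involution on holonomic modules that converts descending chains into ascending chains of kernels, so Noetherianity of the dual yields the Artinian property. The paper leaves this as an assertion rather than writing out the chain argument, so your proposal simply makes the same reasoning explicit.
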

	
	A morphism of smooth algebraic varieties $f: X \to Y$ induces the sheaf-theoretic push-forward $f_* : \operatorname{Mod}(\Dx) \to \operatorname{Mod}(\D_Y)$ and pull-back $f^{-1} : \operatorname{Mod}(\D_Y) \to \operatorname{Mod}(\Dx)$. The same can be done on the level of $\D$-modules. We define the $\D$-module \textit{inverse image} $$f^- : \operatorname{Mod}(\D_Y) \to \operatorname{Mod}(\Dx)$$ as the $\Ox$-module $$f^-(\M) = \Ox \otimes_{f^{-1}\Oc_Y} f^{-1}\M$$ whereon $\xi \in \Tx$ acts via the imposed Leibniz rule $$\xi(f\otimes s) = \xi(f) \otimes s + f\cdot\tilde{\xi}(s)$$ and $\tilde{\xi}$ denotes the image of $\xi$ under the map $\Tx \to \Ox \otimes_{f^{-1}\Oc_Y}\Theta_Y$ of $\Ox$-modules which is $\Ox$-dual to $\Ox \otimes_{f^{-1}\Oc_Y} f^{-1} \Cx \to \Cx$. On the other hand, the $\Dx$-module \textit{direct image} is more easily written down in terms of right $\Dx$-modules; we define:
	$$f_+ :  \operatorname{Mod}(\Dx)^{\operatorname{op}} \to \operatorname{Mod}(\D_Y)^{\operatorname{op}}$$ as $$f_+(\M) = f_*(\M \otimes_{\Dx} \D_{X\to Y})$$ where $\D_{X\to Y} = f^-(\D_Y)$ is the \textit{transfer module}, which is a $(\Dx,f^{-1}\D_Y)-$bimodule. Dually, we can write down the opposite transfer module $$\D_{Y \leftarrow X} = \Omega_X \otimes_{\Ox} \D_{X\to Y} \otimes_{f^{-1}\Oc_Y} f^{-1} \Omega_Y^\vee$$ where $(-)^\vee = \mathcal{H}om_{\Ox}(-,\Ox)$ denotes the sheaf-theoretic dual, and then $$f_+ :  \operatorname{Mod}(\Dx) \to \operatorname{Mod}(\D_Y)$$ is given by $$f_+(\M) = f_*(\D_{Y \leftarrow X} \otimes_{\Dx} \M)$$ with inherited $\D_Y$-module structure. The inverse image functor is right exact; consequently, we may pass over to the realm of (bounded) derived categories by defining the left derived functor $$\mathbb{L}f^- : D^b(\D_Y) \to D^b(\D_X)$$ and its shifted analogue $$f^\dagger = \mathbb{L}f^-[\dim X - \dim Y] : D^b(\D_Y) \to D^b(\D_X)$$
	which plays an important role in that it is in certain contexts an inverse to the derived direct image
	\begin{align*}
		\int_f: D^b(\Dx) &\to D^b(\D_Y) \\
		\M^\bullet &\mapsto \mathbb{R}f_+(\D_{Y \leftarrow X} \otimes^{\mathbb{L}}_{\D_X} \mathcal{M}^\bullet).
	\end{align*}
	The reader averse to technicalities shall be reassured that a brief category-theoretic description of the functors $f^\dagger$ and $\int_f$ will suffice to formulate and prove the results in this paper. We summarise the necessary in the following result and refer to Chapter 2.7 in \cite{hotta} for proofs.
	\begin{lemma} \label{adjoint} If $f : X \hookrightarrow Y$ is a locally closed embedding of smooth varieties, then
		\begin{enumerate}[(a)]
		\setlength\itemsep{0.1em}
			\item the functors $f^\dagger$ and $\int_f$ preserve holonomicity;
			\item for any $\M \in D^b(X)$, we have $f^\dagger \int_f (\M) = \M$;
			\item $f^\dagger$ has a left adjoint $\int_{f!} : D^b(X) \to D^b(Y)$;
			\item if $f$ is proper then $\int_f \cong \int_{f!}$. 
		\end{enumerate}
	\end{lemma}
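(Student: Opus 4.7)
The plan is to reduce to the two basic cases---open embeddings and closed embeddings---by factoring any locally closed embedding $f : X \hookrightarrow Y$ as $X \xhookrightarrow{j} \overline{X} \xhookrightarrow{i} Y$, where $j$ is an open embedding into the closure of $X$ and $i$ is a closed embedding. Since the six-functor formalism is compatible with composition and all the claimed properties are closed under composition of embeddings, handling each of the two cases separately suffices.

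For a closed embedding $i : Z \hookrightarrow Y$, I would invoke Kashiwara's equivalence, which identifies $\operatorname{Mod}(\D_Z)$ with the full subcategory of $\operatorname{Mod}(\D_Y)$ consisting of modules set-theoretically supported on $Z$, realised by the exact functor $\int_i$ with quasi-inverse $i^\dagger = i^{-1}[-\operatorname{codim}]$ on objects supported on $Z$. Since for a closed embedding the conormal bundle identifies $T^*Y|_Z$ with $T^*Z$ up to normal directions, one reads off preservation of holonomicity by comparing characteristic varieties, giving (a). Part (b) is then the content of the equivalence (since $\int_i\M$ is automatically supported on $Z$), and part (d) holds because closed embeddings are proper and one computes that $\int_{i!}$ agrees with $\int_i$ directly from the definition via Verdier duality once (c) is established.

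For an open embedding $j : U \hookrightarrow Y$, the functor $j^\dagger$ is simply the restriction $j^{-1}$ (up to a zero shift, since open embeddings are \'etale), and restriction obviously preserves holonomicity and satisfies $j^\dagger \int_j \M = \M$ at the level of complexes since restricting $\mathbb{R}j_* (\D_{Y \leftarrow U} \otimes^{\mathbb L}_{\D_U} \M)$ back to $U$ recovers $\M$. The serious content is preservation of holonomicity under $\int_j$: this is the deepest input and is, in my opinion, \emph{the main obstacle} of the lemma. It rests on the fact that for $\N$ holonomic on $U$, the higher direct images $\mathbb{R}^i j_* \N$ remain holonomic on $Y$---ultimately a finiteness statement that requires either a resolution-of-singularities argument (reducing to the case where the complement $Y \setminus U$ is a normal crossings divisor) or Bernstein's theorem on the existence of $b$-functions to control the singularities at the boundary.

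For part (c), I would define $\int_{f!} := \mathbb{D}_Y \circ \int_f \circ \mathbb{D}_X$, where $\mathbb{D}$ denotes Verdier duality on the respective derived categories. The functor is well-defined on $D^b_h$ by (a) together with the fact that Verdier duality preserves holonomicity. The adjunction $(\int_{f!} \dashv f^\dagger)$ is then obtained from the known adjunction $(\int_f \dashv f^\dagger)$---valid, for instance, when $f$ is affine---by applying $\mathbb{D}$ on both sides and using the compatibility $\mathbb{D} \circ f^\dagger \cong f^\dagger \circ \mathbb{D}$, which itself follows from the base-change properties of the transfer bimodule. Finally, part (d) follows because properness makes $f_* = f_!$ at the sheaf level, so after unwinding the definitions the duality swap becomes trivial and $\int_{f!} \cong \int_f$.
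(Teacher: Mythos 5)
The paper itself gives no proof of this lemma: it is presented as a summary of standard facts with a pointer to Chapter 2.7 of \cite{hotta} (the holonomicity statements live in Chapter 3.2 there). So your proposal is really being compared with the standard literature argument rather than with anything in the text. Your overall architecture --- factor the embedding into an open and a closed piece, treat closed embeddings via Kashiwara's equivalence, treat open embeddings via restriction and pushforward, and define $\int_{f!}$ as $\mathbb{D}_Y \circ \int_f \circ \mathbb{D}_X$ --- is exactly the standard one, and you correctly isolate the genuinely hard step, namely that $\int_j$ preserves holonomicity for an open embedding $j$, which does require Bernstein's $b$-function (or a resolution-of-singularities reduction to a normal crossings complement).

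Two points need repair. First, your factorisation $X \hookrightarrow \overline{X} \hookrightarrow Y$ through the closure is problematic: the closure of a smooth locally closed subvariety need not be smooth (take $X$ to be the smooth locus of a nodal cubic in $Y = \mathbb{A}^2$), and the $\D$-module formalism used here is only set up on smooth varieties. The standard fix is to factor the other way round: $X$ is closed in the open subset $V = Y \setminus (\overline{X}\setminus X)$, so $f$ factors as a closed embedding of smooth varieties followed by an open embedding. Second, in part (c) you invoke ``the known adjunction $(\int_f \dashv f^\dagger)$ \ldots valid when $f$ is affine''; this is backwards for open embeddings, where $\int_j = \mathbb{R}j_*$ is the \emph{right} adjoint of $j^\dagger = j^{-1}$. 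The adjunction $\int_f \dashv f^\dagger$ holds for closed (more generally proper) maps; for the open piece one must instead dualise the opposite adjunction $j^\dagger \dashv \int_j$, using that $j^\dagger$ commutes with $\mathbb{D}$ for open (\'etale) maps --- a compatibility that fails for closed embeddings, so your blanket appeal to $\mathbb{D}\circ f^\dagger \cong f^\dagger\circ\mathbb{D}$ must also be restricted to the open case. With these corrections your outline goes through and agrees with the treatment in \cite{hotta}.
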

	
	Since $\Ox \leq \Dx$, any $\Dx$-module $\M$ is simultaneously an $\Ox$-module; the $\Dx$-module structure on $\M$ extends the $\Ox$-module structure only insofar as it specifies an additional action $\Tx \rightarrow \End[\M]$. One can verify that this action is $\mathbb{C}$-linear, commutes with the Lie bracket, satisfies the Leibniz rule and that any action satisfying these three conditions specifies a $\Dx$-module structure on any $\Ox$-module. In particular, the data of an $\Ox$-coherent $\Dx$-module $\M$ is equivalent
	% \footnote{To see that these are equivalent, note that a $k$-linear action $\Tx \hookrightarrow \End[M]$ specifies $k$-linear morphism $$\nabla : M \to \Cx \otimes_\Ox M : m \mapsto \sum_i (\omega_i \otimes m_i)$$ for some choice of basis $\{m_i\}$ for the image of $\xi \in \Tx$ and scalars $\omega_i \in \Cx$ such that $\xi(m) = \sum_i \omega_i(\xi) \otimes m_i$. Conversely, the action $\Tx \hookrightarrow \End[M]$ can be retrieved via $\xi \cdot m = \nabla(m)(\xi)$. By definition, $\nabla$ is a connection iff it satisfies the Leibniz rule and it is flat iff it commutes with the Lie bracket.}
	to the data of an $\Ox$-module $\M$ together with an (algebraic) connection $$\nabla : \M \to \Cx \otimes_\Ox \M$$ which is flat (i.e. integrable). In the case that $X \cong \mathbb{C}$, any (flat) connection is trivialisable. Choosing such a trivialisation amounts to finding an isomorphism $\nabla \cong d + A(z)$ where $A(z)$ is a matrix of meromorphic one-forms. If $A(z)$ can be chosen in a way that all its entries have poles of order at most 1, then we say that $\nabla$ has \textit{regular singularities}. Analytically, this corresponds to the differential equation $\frac{dF}{dz} = AF$ having moderate growth at its singularities; algebraically, it means that the operator $\p^z$ does not decrease the degree of a polynomial by more than one, or alternatively, that $\M$ is stable under the action of the operator $z\p^z$.
	
	This algebraic characterisation allows us to extrapolate the definition for more general $\D$-modules. Assume first that $C$ is a smooth curve and $\iota : C \hookrightarrow \hat{C}$ is a projectification. Let $\D_{\hat{C}}^C$ be the subsheaf of $D_{\hat{C}}$ locally generated by $\Oc_{\hat{C}}$ and the vector fields which vanish outside $C$. We say that an $\Oc_C$-coherent $\D_C$-module $\M$ has \textit{regular singularities} if there is an $\Oc_{\hat{C}}$-coherent $\Ox$-submodule $\mathcal{N}$ of $\iota_+(\M)$ which is stable under the action of $\D_{\hat{C}}^C$. This concept is invariant under taking extensions and subquotients.
	
	Finally, in the case where $X$ is any variety, we say that a simple holonomic $\Dx$-module $\M$ has \textit{regular singularities} if there is an open dense subset $U \subseteq X$ such that $\mathcal{N} = \M|_{U}$ is $\Oc_U$-coherent and any restriction of $\mathcal{N}$ to a smooth curve has regular singularities in the sense of the previous definition. A holonomic $\Dx$-module has regular singularities if so do all its simple subquotients. 
	
	\subsection*{Riemann--Hilbert Correspondence}
	The inceptive result utilising $\D$-modules as a bridge between mathematical fields is perhaps the Riemann--Hilbert correspondence (RH). This result was shown first by Deligne \cite{deligne}, formulated as an equivalence between the category of flat connections with regular singularities on algebraic vector bundles on a smooth complex variety $X$ and the category of locally constant sheaves of complex vector spaces on $X$. Later, it was generalised to a correspondence between the derived category of holonomic $\Dx$-modules with regular singularities and the derived category of coherent sheaves of complex vector spaces on $X$, independently by Kashiwara \cite{kashiwara} and Mebkhout \cite{Mebkhout}. We introduce it here in this latter form.
	
	A sheaf of complex vector spaces $\mathcal{F}$ on a variety $X$ is \textit{constructible} if there exists a stratification\footnote{\label{stratification}For our purposes, a \textit{stratification} of a smooth variety $X$ shall be a finite decomposition into connected locally closed subsets $X = \coprod X_i$ called \textit{strata}, such that the closure of each stratum is a union of strata.} of $X$ such that $\mathcal{F}$ is locally constant of finite rank on each $X$. Let $\operatorname{Sh}_c(X; \mathbb{C})$ be the category of constructible sheaves of finite-dimensional complex vector spaces and denote by $D^b(\operatorname{Sh}_c(X;\mathbb{C}))$ the associated bounded derived category. Similarly, let $D^b_{rh}(\Dx)$ be the bounded derived category of holonomic $\Dx$-modules with regular singularities. We now want to define a certain “solutions functor" which associates to a $\D$-module describing a differential equation the sheaf of its solutions. Since algebraic differential equations need not  have algebraic solutions, we should rather carry out this construction in an analytic setting.
	
	A smooth variety $X \subseteq \mathbb{P}^n$ comes with a natural complex manifold $X^{\operatorname{an}}$ given by the complex structure on $ X(\mathbb{C})$. As the Zariski topology is coarser than the Euclidean, this yields a continuous map $\iota : X^{\operatorname{an}} \to X$, and hence, a functor \begin{align*}
		(-)^{\operatorname{an}} : \operatorname{Mod}(\Dx) &\to \operatorname{Mod}(\D_{X^{\operatorname{an}}}) \\
		\M &\mapsto \Oc_{X^{\operatorname{an}}} \otimes_{\iota^{-1}\Ox} \iota^{-1} \M
	\end{align*}
which is exact and induces a functor on the associated derived categories $$(-)^{\operatorname{an}} : D^b(\operatorname{Mod}(\Dx)) \to D^b(\operatorname{Mod}(\D_{X^{\operatorname{an}}})).$$
This allows for the definition of a generalised "solutions functor", and therewith, the formulation of the Riemann--Hilbert correspondence:

\begin{theorem}[Riemann--Hilbert Correspondence]
	The de Rham functor 
	\begin{align*}
		\DR : D^b_{rh}(\Dx) &\to D^b(\operatorname{Sh}_c(X;\mathbb{C})) \\
		\M^\bullet &\mapsto \Omega^\bullet_{X^{\operatorname{an}}} \otimes^{\mathbb{L}}_{\D_X^{\operatorname{an}}} (\M^\bullet)^{\operatorname{an}}
	\end{align*}
is an equivalence.
\end{theorem}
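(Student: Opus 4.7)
The plan is to split the equivalence into three components: showing that $\DR$ is well-defined (lands in $D^b(\operatorname{Sh}_c(X;\mathbb{C}))$), that it is fully faithful, and that it is essentially surjective. A quasi-inverse will be constructed in the course of establishing the latter.

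For well-definedness, I would argue by induction on $\dim X$ that $\DR(\M)$ has constructible cohomology whenever $\M \in D^b_{rh}(\Dx)$. By devissage along the Artinian property of $\operatorname{Mod}_h(\Dx)$ (Lemma~\ref{Artinian}), it suffices to treat simple regular holonomic modules. For such an $\M$, there exists an open dense $U \subseteq X$ on which $\M|_U$ is $\Ox$-coherent and thus corresponds to a flat algebraic connection with regular singularities; Deligne's theorem then identifies $\DR(\M|_U)$ with a local system of finite rank on $U^{\operatorname{an}}$. On the complement $Z = X \setminus U$, Noetherian induction combined with the triangle ${\int_{j!} j^\dagger \M \to \M \to \int_i i^\dagger \M \to [1]}$ for the open/closed decomposition produces constructibility globally, using Lemma~\ref{adjoint} to preserve (regular) holonomicity.

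For full faithfulness and essential surjectivity, I would first reduce to the analytic situation. The functor $(-)^{\operatorname{an}}$ preserves regular holonomicity, and a GAGA-type comparison argument identifies $\mathbb{R}\!\Hom$ in the algebraic and analytic categories of regular holonomic modules, so the algebraic equivalence follows from the analytic one. In the analytic setting, the quasi-inverse is constructed stratum-by-stratum: given a constructible complex $\mathcal{F}^\bullet$, choose a stratification compatible with $\mathcal{F}^\bullet$, interpret each locally constant cohomology sheaf on an open stratum as a flat meromorphic connection with regular singularities via Deligne's Riemann--Hilbert for local systems, then take the Deligne--Kashiwara minimal extension across the boundary divisor to obtain a regular holonomic $\D$-module. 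Full faithfulness is then reduced to computing $\mathbb{R}\!\Hom$ between such standard extensions, and ultimately to Deligne's analysis of horizontal sections on a punctured disc, where the regularity hypothesis ensures that the comparison map $\mathbb{R}\!\Hom_{\D}(\M,\N) \xrightarrow{\sim} \mathbb{R}\!\Hom(\DR\M,\DR\N)$ is an isomorphism.

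The main obstacle is the construction of the quasi-inverse, and in particular controlling the extension across singular strata. Locally around a point of the boundary divisor, the correspondence between a local system with unipotent-by-semisimple monodromy and a regular meromorphic connection is the genuinely analytic heart of the theorem: one must produce a coherent $\Oc$-lattice stable under $z\p_z$ realising a prescribed monodromy, which amounts to the solution of the classical Riemann--Hilbert problem on the punctured disc. Globally, patching these local models into a well-defined minimal extension requires the verification that the resulting $\D$-module is independent of choices and that the natural transformations $\operatorname{id} \Rightarrow \DR \circ \DR^{-1}$ and $\DR^{-1} \circ \DR \Rightarrow \operatorname{id}$ are isomorphisms; this last step is checked by reduction to the standard extensions $\int_{j!}$, $\int_j$, $\int_{j*}$ of local systems on open strata, for which the comparison is explicit.
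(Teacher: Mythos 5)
The paper does not prove this theorem: it is quoted from the literature, attributed to Deligne for the case of flat connections and to Kashiwara and Mebkhout independently for the general regular holonomic case, so there is no internal argument to compare yours against. Judged on its own terms, your outline follows the standard strategy (constructibility by d\'evissage to the $\Oc$-coherent locus, algebraic-to-analytic comparison, reduction of full faithfulness to Deligne's comparison for regular meromorphic connections on a punctured disc), and you are right to flag the analytic Riemann--Hilbert problem on the disc as the genuine core. As a roadmap it is essentially the Kashiwara--Mebkhout proof architecture.

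There is, however, one step that would fail as written: you cannot construct a quasi-inverse ``stratum-by-stratum'' on objects of $D^b(\operatorname{Sh}_c(X;\mathbb{C}))$. Assigning to each locally constant cohomology sheaf on an open stratum a regular connection and then taking minimal extensions produces, at best, the simple constituents; it does not produce a functor on the derived category, because a complex is not determined by its associated graded for any such filtration, and extensions between strata carry essential data. The standard repair is to prove full faithfulness \emph{first} (by your d\'evissage to the punctured-disc comparison), and then deduce essential surjectivity formally: the essential image of $\DR$ is a full triangulated subcategory of $D^b(\operatorname{Sh}_c(X;\mathbb{C}))$ closed under the gluing triangles $j_!j^{-1}\mathcal{F} \to \mathcal{F} \to i_*i^{-1}\mathcal{F} \to [1]$ and containing the images of local systems on strata (by Deligne's theorem plus Lemma~\ref{adjoint}(a)), hence is everything. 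Alternatively one constructs an honest quasi-inverse functor at once (Kashiwara's tempered solutions), but that is a different and substantially heavier tool than the one you describe. Your sketch also leans on the ``GAGA-type comparison'' of $\mathbb{R}\Hom$ between the algebraic and analytic categories as if it were routine; for regular holonomic modules on a non-proper $X$ this comparison is itself a theorem requiring the regularity hypothesis in an essential way, and should be named as a second load-bearing input rather than folded into a reduction step.
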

Given the natural embedding of categories $$\operatorname{Mod}_{rh}(\Dx) \hookrightarrow D^b_{rh}(\Dx)$$ one may then ask what subcategory of $D^b(\operatorname{Sh}_c(X))$ corresponds to the category of $\Dx$-modules $\operatorname{Mod}(\Dx)$ under the de Rham functor. The answer turns out to be the category of \textit{perverse sheaves} (cf. \cite{perverse_sheaves}), which is a subcategory of the derived category of sheaves that itself behaves like a category of sheaves in certain ways; we refer the reader to Chapter 8 of \cite{hotta} for a precise treatment of the subject.

\subsection*{Beilinson--Bernstein Localisation}
	We now introduce the second main correspondence theorem we will be using: Beilinson--Bernstein localisation. Let $G$ be a semisimple algebraic group over $\mathbb{C}$, $T$ a maximal torus of $G$ and $B \supseteq T$ a Borel subgroup of $G$. Write $\g = \operatorname{Lie}(G)$, $\h = \operatorname{Lie}(T)$ and $\mathfrak{b} = \operatorname{Lie}(B)$ for the associated Lie algebras. All Borel subgroups are conjugate in $G$, yielding a variety $X = G/B$ equipped with a Lie group action $$G \longrightarrow \operatorname{Diff}(X)$$ given by conjugation. The infinitesimal of this action is a Lie algebra morphism $$\g \longrightarrow \G(X,\Theta(X))$$ which induces, by the universal property of the universal enveloping algebra, a map $$\Phi : U(\mathfrak{g}) \longrightarrow \G(X,\Dx).$$ Beilinson and Bernstein showed in \cite{BBL} that this map is surjective and that its kernel is $\ker(\chi_0) \cdot U(\g)$, where $\chi_0 : Z(\g) \to \mathbb{C}$ is the trivial central character. They went further to generalise this to the case of twisted central characters, which allow for a complete description of all $\g$-representations. Indeed, given any $\g$-representation, the centre $Z(\g)$ must act as scalars by Schur's Lemma, and this “central character" factors as $$\chi : Z(\g) \xrightarrow{\,\, \gamma\,\, }U(\h) \xrightarrow{\,\,\lambda\,\,} \mathbb{C}$$ where $\gamma$ denotes the Harish-Chandra homomorphism (cf. \cite{Hotta1998EquivariantD} and $\lambda \in \h^*$ is some dominant and regular weight. We associate to $\lambda$ a one-dimensional $B$-representation $\mathbb{C}_\lambda$, whereon $B$ acts via $\lambda \circ \pi$ for $\pi : B \to T$ the projection; this in turn corresponds to a $G$-equivariant line bundle $$\pi_\lambda : \frac{G \times \mathbb{C}_\lambda}{B} \longrightarrow G/B = X$$ where $B$ acts as $b \cdot (g,v) = (gb^{-1}, b\cdot v)$ and we denote by $\mathcal{L}(\lambda)$ the sheaf of algebraic sections of $\pi_\lambda$ with $\Ox$-module structure given by point-wise multiplication. In this way, we may incorporate the twist by $\lambda$ into $\Dx$, yielding a \textit{sheaf of twisted differential operators} \begin{equation}\label{TDO}
		\Dl = \bigcup_{n \in \mathbb{N}} F_n\Dl
	\end{equation} where $F_0\Dl = \Ox$ and $$F_n\Dl = \{\xi \in \End[\mathcal{L}(\lambda + \rho)] : \forall f \in \Ox, [\xi, f] \in F_{n-1}\Dl\}.$$
	% explicitly given by $$\D_\lambda = \mathcal{L}(\lambda + \rho) \otimes_\Ox \Dx \otimes_\Ox \mathcal{L}(\lambda + \rho)^*$$ where $\rho = \frac{1}{2}\sum_{\lambda \in \Delta^+} \lambda$ is the Weyl vector and $(-)^* = \mathcal{H}om(-,\Ox)$. Here $s \otimes P \otimes s^*$ acts on $t \in \mathcal{L}(\lambda + \rho)$ as $P(s^*(t)) \cdot s$.
	Using the same argument as in the untwisted case, we obtain a map $$\Phi_\lambda : U(\g) \longrightarrow \G(X, \Dl)$$ which can be written down explicitly using the $G$-invariant structure on $\mathcal{L}(\lambda + \rho)$. In the case where $G$ is a matric Lie algebra, it amounts to 
	\begin{equation}\label{diffexp}
		\Phi_\lambda(a)(s)(x) = \left. \frac{\p}{\p t} e^{ta} \cdot s(e^{-ta}x) \right|_{t=0}
	\end{equation} for $a \in U(\g), s \in \Lambda(\lambda + \rho)$ and $x \in X$. The key result in \cite{BBL} is that the map $\Phi_\lambda$ is surjective and has kernel $\ker(\chi_\lambda)\cdot U(\g)$. Now the global sections of any $\Dl$-module $\M$ can be given the structure of a $\g$-representation with central character $\chi_\lambda$. Beilinson--Bernstein localisation gives a strong converse result; we state it here only for regular characters:
	\begin{theorem}[Beilinson--Bernstein Localisation]\label{BBL}
		For any dominant and regular weight $\lambda \in \h^*$, there exists an equivalence of categories:
		\[
		\begin{tikzcd}
			\operatorname{Mod}_{qc}(\Dl) \arrow[rr, swap, bend right = 15, "{\Gamma(X,-)}"] && \operatorname{Mod}(\g, \chi_\lambda) \arrow[swap, ll,bend right = 15, "\Dl \otimes_{U\g} -"]
		\end{tikzcd}
	\]
	where $\operatorname{Mod}_{qc}(\Dl)$ is the category of quasi-coherent $\Dl$-modules and $\operatorname{Mod}(\g, \chi_\lambda)$ is the category of $\g$-representations whose centre acts as $\chi_\lambda$. 
	\end{theorem}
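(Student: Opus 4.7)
The plan is to exhibit $\operatorname{Loc} := \Dl \otimes_{U\g} -$ as a left adjoint to $\Gamma(X,-)$ and then show that the unit and counit of this adjunction are isomorphisms. The adjunction itself is formal once one checks that $\operatorname{Loc}$ lands in $\operatorname{Mod}_{qc}(\Dl)$ and factors through $\operatorname{Mod}(\g, \chi_\lambda)$, both of which follow from the stated surjectivity of $\Phi_\lambda$ and the description of its kernel: $\ker(\chi_\lambda) \cdot U(\g)$ acts as zero on $\Dl$ via $\Phi_\lambda$, so only $\g$-representations with central character $\chi_\lambda$ survive the tensor product non-trivially, and conversely $\Gamma(X,\M)$ on the sheaf side inherits such a central character through $\Phi_\lambda$.

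The heart of the argument is to establish two cohomological inputs for dominant $\lambda$, classically known as Beilinson--Bernstein's \emph{Theorem A} (every quasi-coherent $\Dl$-module is generated as such by its global sections) and \emph{Theorem B} ($H^i(X,\M) = 0$ for all $i>0$ and $\M \in \operatorname{Mod}_{qc}(\Dl)$). I would approach these by first reducing to the coherent case by writing any $\M$ as a filtered colimit of coherent submodules, then choosing a compatible filtration on the coherent piece so that $\gr \M$ becomes a coherent sheaf on $T^*X$ via the identification $\gr(\Dl) \cong \pi_* \Oc_{T^*X}$. The positivity of $\Ll(\lambda+\rho)$ for dominant $\lambda$ feeds into Serre-type vanishing on each graded piece, which then propagates up the filtration via a spectral sequence argument; the action of the centre $Z(\g)$ via $\chi_\lambda$ is what ensures these vanishing statements are uniform across $\M$. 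This translation between representation-theoretic dominance and geometric positivity on $X = G/B$ is the main obstacle.

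Granting Theorems A and B, the remainder is essentially formal. For the counit $\epsilon_\M : \operatorname{Loc}(\Gamma(X,\M)) \to \M$, Theorem A yields surjectivity; writing $\mathcal{K}$ for its kernel and applying the exact functor $\Gamma(X,-)$ (exactness via Theorem B) to the resulting short exact sequence shows $\Gamma(X,\mathcal{K}) = 0$, whereupon Theorem A forces $\mathcal{K} = 0$. For the unit $\eta_M : M \to \Gamma(X, \operatorname{Loc}(M))$, I would present $M$ as a cokernel of a map between free modules over $\Gamma(X,\Dl) \cong U(\g)/\ker(\chi_\lambda)U(\g)$ (possible since $M$ has central character $\chi_\lambda$), apply the right-exact $\operatorname{Loc}$ followed by the exact $\Gamma(X,-)$, and compare cokernels using the already-established base case $\Gamma(X,\operatorname{Loc}(\Gamma(X,\Dl))) = \Gamma(X,\Dl)$.

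Finally, the regularity assumption on $\lambda$ enters when matching the two sides precisely: it ensures that $\chi_\lambda$ uniquely determines $\lambda$ within its Weyl orbit (no non-trivial stabiliser) and that no translation functors collapse distinct weights, so that the category $\operatorname{Mod}(\g,\chi_\lambda)$ is identified with the essential image of $\Gamma(X,-)$ on the nose, upgrading what would otherwise be a Serre quotient into a genuine equivalence.
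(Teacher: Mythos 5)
The paper does not prove this theorem: it is quoted verbatim from Beilinson--Bernstein \cite{BBL} and used as a black box, so there is no internal proof to compare yours against. Judged on its own terms, your outline follows the standard strategy (adjunction, then ``Theorem A'' on generation by global sections and ``Theorem B'' on cohomology vanishing, then the formal unit/counit argument), and the formal parts --- the counit argument via the kernel $\mathcal{K}$, and the unit argument via a free presentation over $\G(X,\Dl) \cong U(\g)/\ker(\chi_\lambda)U(\g)$ --- are correct and are exactly how the deduction is done in the literature.

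Two substantive concerns remain. First, the proposed mechanism for Theorems A and B --- passing to $\gr \M$ as a coherent sheaf on $T^*X$ and invoking ``Serre-type vanishing'' from positivity of $\Ll(\lambda+\rho)$ --- is not how the argument runs and would not obviously close: Serre vanishing only gives $H^{>0}=0$ after twisting by a \emph{sufficiently large} power of an ample line bundle, with the threshold depending on the sheaf, so it cannot give the uniform vanishing you need for a fixed $\lambda$ across all of $\operatorname{Mod}_{qc}(\Dl)$. The actual proof tensors $\M$ with finite-dimensional $G$-representations $V$, uses the filtration of $V \otimes \M$ by $\Dl$-twists together with the decomposition by generalised central characters to transport cohomology between dominant weights, with Borel--Weil--Bott supplying the geometric input for line bundles. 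Second, you attribute both Theorems A and B to dominance and reserve regularity for ``matching the two sides.'' In fact dominance gives only Theorem B (exactness of $\G(X,-)$); regularity is what gives Theorem A, i.e.\ that $\G(X,\M) \neq 0$ for $\M \neq 0$, and without it the counit argument in your third paragraph fails at the step ``Theorem A forces $\mathcal{K}=0$.'' You do correctly sense that without regularity one only obtains a Serre quotient, but the place where that hypothesis enters your own argument is earlier than you say.
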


	\section{A dictionary of $\D$-modules}
	The key philosophy behind this paper is to outline a dictionary between the topological, geometric and representation theoretic aspects of $\D$-modules, with focus on the role of singularities. The significance for the examination of singularities becomes evident when one tries to connect the two correspondences at hand. Recall that given an algebraic group $G$ with Borel subgroup $B$, Beilinson--Bernstein localisation (Theorem \ref{BBL}) provides an equivalence of the category of (potentially twisted) quasi-coherent $\D$-modules on $X = G/B$, and the category of $\g$-representations. In turn, the Riemann--Hilbert correspondence provides an equivalence of a subcategory of the (bounded derived) category of $\D$-modules on $X$ (namely, the subcategory of regular holonomic modules) to the (bounded derived) category of constructible sheaves on $X$. To connect these two correspondences, one thus only has to find the image of the functor $$\G(X,-) \circ \DR(-): D^b(\operatorname{Sh}_c(X)) \to \operatorname{Mod}(\g, \chi_0)$$ i.e. the subcategory of $\g$-representations with trivial central character that corresponds to regular holonomic $\Dx$-modules. We summarise the situation in the following table:
	% The aspect of holonomicity has been taken care of in \textcolor{Red}{[Etingof]}
	\renewcommand{\arraystretch}{2}
		\begin{center}
			\begin{tabular}{C{0.28\linewidth}|C{0.34\linewidth}|C{0.28\linewidth}}
				\textbf{Topology} & \textbf{Geometry} & \textbf{Rep. Theory} \\ \hline
				– & $\D$-modules on $X = G/B$ & Representations of $\g = \operatorname{Lie}(G)$ \\
				% – & Holonomic $\D$-modules on $X = G/B$ & \textcolor{Red}{"Lagrangian support on nilcone"} \\
				Perverse sheaves on $X = G/B$ & Regular holonomic $\D$-modules on $X = G/B$ & \textbf{??}
			\end{tabular}
		\end{center}
	\renewcommand{\arraystretch}{1}
	One task facilitated by this dictionary is the classification of certain Lie algebra representations. A classification of \textit{all} $\g$-representations is utterly hopeless even for the archetypal example of $\g = \slt(\mathbb{C})$, since---as we are about to discuss---this category contains all representations of free groups on finitely many elements. However, this obstruction hints at the same time at a possible structure we may endow the mess with, namely, through understanding subcategories of representations that correspond to $\D$-modules with (few) specified singularities.
	
	Certain subcategories of naturally occurring $\g$-representations have already been classified: classically, given a semi-simple complex Lie algebra $\g$, the finite-dimensional $\g$-representations are classified using the highest weight theorem; a broader collection of Lie algebras called “category $\Oc$" (see \cite{CatO} for a lightning introduction) can be classified using Verma modules. It turns out that these subcategories correspond to $\D$-modules with at most 2 regular singularities, hinting at a potential generalisation of this correspondence to all representations which correspond to holonomic $\D$-modules with specified singularities.
	
	Hereinafter, we will say that a $\g$-representation has certain (regular or irregular) \textit{singularities} at points on its flag variety if it corresponds to a $D$-module with those singularities under the Beilinson--Bernstein correspondence.
	
	\subsection*{$\Oc$-coherent $\D$-modules on smooth curves}
	Let $X$ be an irreducible smooth curve and $\iota : X \hookrightarrow C$ an embedding into a smooth projective curve $C$. Let $\M$ an $\Oc$-coherent $\D$-module on $X$, i.e. an $\Oc$-module together with flat connection $\nabla$, and assume that $\nabla$ has regular singularities on $C \backslash X$. Under the Riemann--Hilbert correspondence, $\M$ yields the complex $\DR(\M) = \Omega_{X^{\operatorname{an}}}^\bullet \otimes^{\mathbb{L}}_{\Dx^{an}} \M^{\operatorname{an}}$ of analytic $\D$-modules. Using the resolution in (\ref{complex}), we find that this complex has trivial homology except in the leftmost place, where the differential agrees with $\nabla$. There, we have \begin{equation}\label{0th homology}
			H^0\DR(\M) = H^0(\Omega_{X^{\operatorname{an}}}^\bullet \otimes^{\mathbb{L}}_{\Dx^{an}} \M^{\operatorname{an}}) \cong \ker(\nabla)
	\end{equation} and since $\nabla$ is $\mathbb{C}$-linear, $\Ll = H^0\DR(\M)$ carries the structure of a sheaf of complex vector spaces on $X$. The Frobenius theorem of differential topology says that $\Ll$ is in fact a constructible sheaf on $X$, i.e. locally constant of finite rank on the strata of some stratification (see footnote \ref{stratification}). By the assumption of $\Oc$-coherence, this stratification actually consists only of a the single stratum $X$.
	
	By path-connectedness, we may join any two points $x,y \in X$ by a path $\gamma : [0,1] \to X$, which induces an isomorphism of stalks $\Ll_{\gamma(0)} \xrightarrow{\,\,\sim\,\,} \Ll_{\gamma(1)}$,\footnote{As $X$ is irreducible, it is also a connected complex manifold, so it must be path-connected under the analytic topology. Thus, given two points $x,y \in X$, we may find a path $\gamma : [0,1] \to X$ from $x$ to $y$; any local system on $[0,1]$ must be constant and hence $\gamma^{-1}\Ll$ must be constant as well. It follows that $\Ll_x \cong (\gamma^{-1}\Ll)_0 \cong (\gamma^{-1}\Ll)_1 \cong \Ll_y$.} so $\Ll_U$ has isomorphic stalks $L = \Ll_x$ at each point. Note that the freedom of these isomorphisms to differ from the identity is precisely the freedom of locally constant sheaves to differ from the constant sheaf on $X$.  It is not hard to show that homotopic paths induce the same isomorphism and that composition of isomorphisms commutes with the composition of maps.  Thus we obtain a group homomorphism $$\rho_\Ll : \pi_1(X) \longrightarrow \GL(L)$$
	which we call the \textit{monodromy representation of $\Ll$}.
	
	Conversely, let $\rho : \pi_1(X) \to \GL(L)$ be a monodromy representation (for $L$ some complex vector space of finite dimension). As $X$ is a connected complex manifold, we may chose a universal cover $p : \tilde{X} \to X$. Endowing $L$ with the discrete topology, $\pi_1(X)$ acts on $L$ via $\rho$ and on $\tilde{X}$ as deck transformations: a homotopy class of paths $[\gamma]$ acts on a point $\tilde{x} \in p^{-1}(x)$ as $[\gamma] \cdot \tilde{x} = \tilde{\gamma}_{\tilde{x}}(1)$ for $\tilde{\gamma}_{\tilde{x}}$ a lift of $\gamma$ starting at $\tilde{x}$. Thus we may form the analytic bundle \begin{equation}\label{bundle}
		\pi_\rho : \frac{\tilde{X} \times L}{\pi_1(X)} \longrightarrow X
	\end{equation} whose sheaf of sections we shall denote as $\Ll_\rho$. Since this construction is independent of local perturbation, $\Ll_\rho$ is locally constant on $X$. Via the Riemann--Hilbert correspondence, $\Ll_\rho$ corresponds to a unique $\Ox$-coherent $\Dx$-module $\M$; explicitly, $\M$ can be realised as the $\Ox$-module
	\begin{equation}\label{inverse RH}
		\M_\rho = \Ox \otimes_{\mathbb{C}_X} \Ll_\rho
	\end{equation}
	whereon $\Dx$ acts as the connection
	\begin{align*}
		\nabla: \M_\rho = \Ox \otimes_{\mathbb{C}_X} \Ll_\rho &\to \Cx \otimes_{\mathbb{C}_X} \Ll_\rho \cong \Cx \otimes_{\Ox} \M_\rho \\
		(f,v)&\mapsto df \otimes v.
	\end{align*}
	We may find the monodromy of $\nabla$ on $C \backslash X$ and glue an analytic connection with (imposed) regular singularities at each of the points in $ C \backslash X$. By Serre's GAGA (cf. \cite{GAGA}), this analytic connection on $C$ entails a unique algebraic structure, giving rise to an algebraic vector bundle $\overline{\Ll}$ on $C$ with algebraic flat connection.
	
	One can see that this construction is inverse to the functor in (\ref{0th homology}), making this an equivalence. We have shown that $\Ox$-coherent $\Dx$-modules with regular singularities on $C$ are equivalent to finite-dimensional $\pi_1(X)$-representations.
	
	\subsection*{Classification of holonomic $\D$-modules}
	Having established a topological characterisation of $\Oc$-coherent $\D$-modules on a smooth curve $X$, we'd like to extend this idea to holonomic $\D$-modules in general. Let $\M$ be a holonomic $\D$-module on a smooth variety $X$. As we argued in the introduction, $\M$ fails to be $\Oc$-coherent only insofar as its characteristic variety $\operatorname{Ch}(\M)$ extends outside the identity section $T^*_XX$. The projection $\pi : \operatorname{Ch}(\M) = \M$ is surjective, and by assumption of holonomicity we have $\dim(\operatorname{Ch}(\M)) = \dim(X)$, so there must be some open dense
	\begin{equation}
		j : U \hookrightarrow X \label{gen}
	\end{equation}
such that $\pi$ has finite fibres on $U$ (here we use Theorem 1.25 in \cite{shafarevich}). On the other hand, the fibre of $\pi$ at a point $x \in X$ whose preimage intersects the complement of $T_X^*X$ must be stable under scalar multiplication, so in particular $\pi^{-1}(x)$ must have positive dimension and $x \notin U$. Thus $\operatorname{Ch}(\M|_U) = T^*_UU$ and $\M|_U$ is $\Oc|_U$-coherent.

We may use certain well-behaved extensions of modules on subsets of the form (\ref{gen}), as well as their closed complements, to establish a classification of holonomic $\D$-modules which builds on that of the preceding subsection. The extension we define works on any locally closed subvariety $A \subseteq X$ with affine embedding $\iota : A \hookrightarrow X$.

Suppose that we are given such an embedding and some holonomic $\D_A$-module $\M$. By Lemma \ref{adjoint}(b), there exists a bijection
	\begin{equation*}
		I : \Hom_{D_h^b(\D_A)}(\M, \M) \xrightarrow{\,\, \sim \, \,} \Hom_{D_h^b(\D_A)} \left(\M , \iota^\dagger \int_\iota \M \right) \label{a}
	\end{equation*}
	and by Lemma \ref{adjoint}(c), there exists a bijection
	\begin{equation*}
		 J : \Hom_{D_h^b(\D_A)} \left(\M , \iota^\dagger \int_\iota \M \right) \xrightarrow{\,\, \sim \, \,} \Hom_{D_h^b(\Dx)} \left(\int_{\iota!} \M , \int_\iota \M \right) \label{b}.
	\end{equation*}
 	As $\iota$ is affine, $\D_{X \leftarrow A}$ is locally free of finite rank
 	%(cf. Example 1.3.2 in \cite{hotta})
 	and the higher homology groups of  $\int_\iota \M$ and $\int_{\iota!} \M$ vanish; thus we may regard both $\int_\iota \M$ and $\int_{\iota!} \M$ as $\Dx$-modules. This begs the definition of the natural $\Dx$-module $$\E(A, \M) = \operatorname{Im}(I \circ J (\operatorname{id}_\M))$$ which lies in between $\iota_!(\M)$ and $\iota_+(\M)$. We refer to it as the \textit{minimal extension} of $\M$.  It is a consequence\footnote{Although this deduction is far from trivial. See Theorem 3.4.2 in \cite{hotta}.} of Kashiwara's theorem that the minimal extension of a simple $\D_A$-module $\M$ is the unique simple quotient of $\int_{\iota!} \M$, where by a \textit{simple} $\D$-module we mean one that has exactly two distinct submodules, namely itself and the trivial module.
 	
 	It turns out that these minimal extensions are all one needs to build a holonomic $\D$-module from simple holonomic $\D$-modules on a stratification of $X$. Indeed,
 	\begin{theorem}\label{comp series}
 		Any holonomic  $\Dx$-module $\M$ admits a composition series $$0 \trianglelefteq \M_n \trianglelefteq \M_{n-1} \trianglelefteq \ldots \trianglelefteq \M_1 = \M$$ such that for each $i \in \{1,\ldots,n\}$, there is isomorphism $\frac{\M_{i-1}}{\M_i} \cong \E(A_i, \mathcal{N}_i)$ for some locally closed $A_i \subseteq X$ and $\Oc_{A_i}$-coherent $\D_{A_i}$-module $\mathcal{N}_i$. 
 	\end{theorem}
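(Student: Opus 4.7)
The plan is to induct on the length of $\M$. Since $\operatorname{Mod}_h(\Dx)$ is Artinian by Lemma~\ref{Artinian} and its objects are also Noetherian on a quasi-projective $X$, every holonomic $\Dx$-module has finite length and so admits a Jordan--H\"older composition series with simple subquotients. The inductive step is immediate: pick a simple submodule $\M_n \trianglelefteq \M$ (guaranteed by Artinianity), apply the inductive hypothesis to $\M/\M_n$, and splice. Hence the entire content reduces to the base case --- every simple holonomic $\Dx$-module $\M$ is isomorphic to some $\E(A, \N)$ with $A \subseteq X$ an affine locally closed subvariety and $\N$ an $\Oc_A$-coherent $\D_A$-module.

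For the base case, I would first invoke the characteristic-variety argument preceding (\ref{gen}) to produce an open dense $U \subseteq X$ on which $\M|_U$ is $\Oc_U$-coherent, then shrink $U$ further so that $j : U \hookrightarrow X$ is an affine open embedding. Set $\N = j^\dagger \M$. Because $j$ is affine, both $\int_j \N$ and $\int_{j!} \N$ are concentrated in degree zero and thus are genuine holonomic $\Dx$-modules; in particular $\E(U, \N)$ is by construction the image of the canonical map $\int_{j!} \N \to \int_j \N$.

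Next, Lemma~\ref{adjoint}(b),(c) supply canonical $\Dx$-module morphisms
\begin{equation*}
\alpha : \int_{j!} \N \longrightarrow \M, \qquad \beta : \M \longrightarrow \int_j \N,
\end{equation*}
namely the counit of $\int_{j!} \dashv j^\dagger$ and the unit of $j^\dagger \dashv \int_j$ (the latter adjunction being implicit for open $j$ in the identity $j^\dagger \int_j = \operatorname{id}$). Restricting along $j$, both $\alpha$ and $\beta$ pull back to the identity on $\N$, so $\operatorname{coker}(\alpha)$ and $\ker(\beta)$ are supported on the proper closed subset $X \setminus U$. Simplicity of $\M$, together with the fact that neither $\operatorname{Im}(\alpha)$ nor $\M/\ker(\beta)$ restricts to zero on $U$, forces $\operatorname{Im}(\alpha) = \M$ and $\ker(\beta) = 0$. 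Consequently $\beta \circ \alpha$ factors as $\int_{j!} \N \twoheadrightarrow \M \hookrightarrow \int_j \N$, and naturality of the two adjunctions identifies $\beta \circ \alpha$ with the canonical map defining $\E(U, \N)$. Therefore
$$\E(U, \N) \,=\, \operatorname{Im}(\beta \circ \alpha) \,\cong\, \M,$$
closing the base case.

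The main obstacle is the naturality identification of $\beta \circ \alpha$ with the canonical map $\int_{j!} \N \to \int_j \N$ used in the definition of $\E(U, \N)$; this amounts to a diagram chase in the two adjunctions of Lemma~\ref{adjoint} and is the only step requiring real categorical bookkeeping. The remaining ingredients --- the Jordan--H\"older reduction, the existence of the generic $\Oc$-coherent locus $U$, and the support-of-kernel/cokernel argument --- are essentially formal consequences of the machinery already set up earlier in the excerpt.
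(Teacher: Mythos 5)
Your overall strategy is the same as the paper's: reduce to simple subquotients via Artinianity and finite length, pass to an affine open $U$ on which the module is an integrable connection, and use the adjunctions of Lemma \ref{adjoint} to identify each simple subquotient with a minimal extension. The only real difference is in the last step: the paper obtains the surjection $\int_{j!} j^\dagger \E \to \E$ and then quotes the footnoted consequence of Kashiwara's theorem that $\E(U,\N)$ is the \emph{unique} simple quotient of $\int_{j!}\N$, whereas you exhibit $\M$ directly as the image of the canonical map $\int_{j!}\N \to \int_j\N$ by factoring it as $\int_{j!}\N \twoheadrightarrow \M \hookrightarrow \int_j\N$. Your variant is slightly more self-contained, at the price of the naturality diagram chase you rightly flag as the delicate point.

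There is, however, one concrete step that fails as written: a simple holonomic $\Dx$-module need not restrict to a \emph{nonzero} connection on any open dense $U \subseteq X$. If $\M$ is supported on a proper closed subvariety --- for instance $\M = (i_0)_+(\mathbb{C})$ on $\mathbb{P}^1$, which is exactly one of the modules the theorem is meant to capture, with $A_i = \{0\}$ --- then on the generic locus $U$ of $\Oc$-coherence one has $\N = j^\dagger\M = 0$, the maps $\alpha$ and $\beta$ are zero, and your assertion that ``neither $\operatorname{Im}(\alpha)$ nor $\M/\ker(\beta)$ restricts to zero on $U$'' is false. This is precisely why the statement allows $A_i$ to be an arbitrary locally closed subset rather than an open dense one. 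The repair is to first replace $X$ by the support of $\M$: take $A$ to be an affine open dense subset of the smooth locus of $\operatorname{supp}\M$, use Kashiwara's equivalence for the closed embedding of that stratum to regard $\M$ as (the pushforward of) a simple module there, and only then run your adjunction argument. The paper's own proof is equally terse on this point, but your write-up makes an explicit claim that is false in the closed-support case, so the adjustment is needed for the argument to cover all simple holonomic modules.
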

 \begin{proof}
 		Let $\M$ be a holonomic $\Dx$-module. As submodules and quotients of holonomic modules are themselves holonomic, we may choose holonomic submodules $\ldots \subseteq \M_3 \subseteq \M_2 \subseteq \M_1 = \M$ such that the quotients in this sequence are holonomic and simple. By Lemma \ref{Artinian}, this sequence must terminate, yielding a tower $$0 \trianglelefteq \M_n \trianglelefteq \M_{n-1} \trianglelefteq \ldots \trianglelefteq \M_1 = \M$$ whose quotients are simple. Choose any one such quotient $\E = \frac{\M_i}{\M_{i+1}}$. Since $\E$ is holonomic as well, we may find an open dense subset with affine embedding as in (\ref{gen}), such that $j^\dagger \E$ is an (algebraic) flat connection. By Lemma \ref{adjoint}, we have
 		$$\Hom_{\Dx}\left( j^\dagger \E , j^\dagger \E \right) \cong \Hom_{\Dx} \left(\int_{j!} j^\dagger \E , \E \right)$$
 		and by passing $\operatorname{id_\M}$ through this isomorphism, we obtain a map $\int_{j!} j^\dagger \E \to \E$. As $\E$ is simple, this map must be surjective and $\E$ is in fact a simple quotient of $\int_{j!} j^\dagger \E$; as we remark above, any such module must have $\E \cong \E(U, X)$. 
 	\end{proof}
	
	\section{The case $\g = \slt$}
	The remainder of this paper should be treated as an investigation into the role that $\D$-module singularities play in their corresponding $\g$-representations. We will work entirely over $G = \operatorname{SL}_2(\mathbb{C})$; in this section, we briefly introduce this Lie group (and more pertinently, its Lie algebra), alongside with the structure of $\D$-modules on its flag variety. Our analysis is based on \cite{romanov} and we refer the reader thereto for detailed derivations.
	
	Recall that $G = \operatorname{SL}_2(\mathbb{C})$ is the Lie group of complex $2\times2$-matrices with unit determinant and its Lie algebra $\g = \slt(\mathbb{C})$ may be identified with the algebra of traceless $2 \times 2$-matrices over $\mathbb{C}$. The standard basis for $\g$ is given by the matrices
	
	\[
	E = \begin{pmatrix} 0 & 1 \\ 0 & 0 \end{pmatrix}, \qquad 
	F = \begin{pmatrix} 0 & 0 \\ 0 & 1 \end{pmatrix}, \qquad
	H = \begin{pmatrix} 1 & 0 \\ 0 & -1 \end{pmatrix}
	\]
	
	and we note that $\h = \mathbb{C} \cdot H$ is a Cartan subalgebra. The set of upper-triangular matrices in $G$ forms the  Borel subgroup
	$$B = \left\{\begin{pmatrix} a & b \\ 0 & a^{-1} \end{pmatrix} : a \in \mathbb{C}, b \in \mathbb{C} \right\}$$ and its Lie algebra $\mathfrak{b} = \operatorname{Lie}(B)$ is a Borel subalgebra of $\g$ which contains $\h$; indeed, writing $\mathfrak{n}^+$ and $\mathfrak{n}^-$ for the spans of the positive and negative roots spaces of $\g$ w.r.t. $\h$, we obtain $\mathfrak{b} = \h \oplus \mathfrak{n}^+$ and $\g = \mathfrak{b} \oplus \mathfrak{n}^-$. As $\h$ is spanned by $H$, a quick calculation reveals that the only two roots of $\g$ w.r.t. $\h$ are the elements of $\h^*$ generated by $\lambda^+ = H \mapsto 2$ and $\lambda^{-}: H \mapsto -2$, respectively. Hence the Weyl group of w.r.t. $\h$ contains only the identity and one reflection, i.e. $W = \mathbb{Z}/2\mathbb{Z}$, the Weyl element is $\rho = \frac{1}{2} \cdot 2$, and the weight lattice in $\h^*$ is generated by $\lambda^{+}$, i.e. $P = \langle \lambda^+\rangle \cong \mathbb{Z}$. 
	
	Consider now the natural action of $G$ on $\mathbb{C}^2 \backslash \{0\}$ via matrix multiplication, which is transitive and under which the stabiliser of the line $\mathbb{C} \cdot (1,0)$ is precisely $B \subseteq G$. Thus the flag variety of $G$ may be identified as 
	\begin{align*}
		X = G/B &\xrightarrow{\,\, \sim \,\,} \mathbb{CP}^1 \\
		g &\mapsto [g \cdot (1,0)]
	\end{align*}
	and $G$ acts on $X$ via matrix multiplication. We will distinguish the points $0 = [1:0]$ and $\infty = [0:1]$ on $X$; this allows for the definition of an atlas on $X$ given by the charts $U_0 = X \backslash \{\infty\}$ and and $U_\infty = X \backslash \{0\}$ and the coordinates \begin{align*}
		z : U_0 &\to \mathbb{C}&w : U_\infty &\to \mathbb{C}, \\
		[x : y] &\mapsto y/x & [x : y] &\mapsto x/y
	\end{align*}
	which are related on $U_0 \cap U_\infty$ via $z = w^{-1}$, so that $U_0 \cap U_\infty \cong \mathbb{C}^\times$ and the diffeomorphism $x^{-1} : \mathbb{C}^\times \to \mathbb{C}^\times$ is the transition map between the two charts. To avoid confusion, we fix throughout this paper the following notation for all embeddings which might be involved:
		\[
		\begin{tikzcd}
			&&X&& \\
			&U_0 \arrow[ur,"j_0", hookrightarrow,swap]&&U_\infty \arrow[ul,"j_\infty", hookrightarrow]& \\
			\{0\} \arrow[ur,"i_{00}", hookrightarrow,swap]\arrow[uurr,"i_{0}", hookrightarrow, bend left = 30]&&\ins \arrow[uu, "j_{0\infty}" description, hookrightarrow]\arrow[ul,"j_{00}", hookrightarrow] \arrow[ur,"j_{\infty\infty}", swap, hookrightarrow] && \{\infty\} \arrow[ul,"i_{\infty\infty}", hookrightarrow] \arrow[uull,"i_{\infty}", hookrightarrow, bend right = 30,swap]
		\end{tikzcd}\]
	so that $i$'s denote closed immersions and $j$'s denote open immersions. We note that $\G(U_0, \Ox) = \mathbb{C}[z]$ and $\G(U_\infty, \Ox) = \mathbb{C}[w]$. Given a line bundle $\Ll$ on $X$, we thus have isomorphisms $$\varphi_0 : \G(U_0,\Ll) \xrightarrow{\sim} \mathbb{C}[z] \qquad \varphi_\infty : \G(U_\infty, \Ll) \xrightarrow{\sim} \mathbb{C}[w]$$ and restricting to $\ins$, we obtain an automorphism $$\psi = \varphi_0|_\ins  \circ (\varphi|_\ins)^{-1} : \G(\ins, \Ox) \xrightarrow{\sim} \G(\ins, \Ox)$$ which, roughly speaking, describes how $\Ll$ is ``glued together" on the intersection of these two affine charts; this is the only degree of freedom in the choice of $\Ll$. As $\G(\ins, \Ox)$ is generated by unity over itself, any module automorphism must have the form $g \mapsto f\cdot g$ for some $f \in \G(\ins, \Ox)$, so in particular $\psi(g) = f_0 \cdot g$ and $ \psi^{-1}(g) = f_0^{-1} \cdot g$ where $f\cdot f^{-1} = 1$. This means that neither $f_0$ nor its inverse can have a zero in $\ins$ and we must have $f_0 = cz^n$ for some constant $c \in \mathbb{C}^\times$ and $n \in \mathbb{Z}$. It can be shown that $n$ determines $\psi$, and hence the line bundle $\Ll$, up to isomorphism\footnote{See Corollary 6.17 in \cite{hartshorne}} regardless of the choice of $c$ (so we may in fact assume that $c = 1$). We will henceforth denote by $\psi_n$ and $\Oc_n$ the automorphism and line bundle, respectively, associated to $n$. Explicitly,
	\begin{equation}\label{small psi}
		\psi_n : \G(\ins,\Oc_n) \xrightarrow{\sim} \G(\ins, \Oc_n) : g \mapsto z^n \cdot g
	\end{equation} holds.
	\subsection*{Twisted differential operators on $X = \mathbb{CP}^1$}
	Given a dominant integral weight $\lambda \in P^+ \cong \mathbb{Z}_{>0}$, we may construct the sheaf of differential operators on $X$ twisted by $\lambda$ in accordance with the definition in (\ref{TDO}). The line bundle $\mathbb{C}_\lambda$ then corresponds to $\Ol$ and $\Dl$ is the sheaf of twisted differential operators on $\Oc_{\lambda - 1}$, where the translation by $-1$ comes from the Weyl element $\rho = 1$. For integer $\lambda$, we have $\mathcal{D}_\lambda = \operatorname{Diff}(\Oll, \Oll)$ so we obtain:
	
	\begin{lemma}\label{equivtwist}
	    If $\lambda \in \mathbb{Z}$ then the functor $$ \Oll \otimes_{\Ox} - : \operatorname{Mod}(\Dx) \xrightarrow{\,\, \sim \,\,} \operatorname{Mod}(\Dl)$$
	    is an equivalence between the category of $\Dx$-modules $\operatorname{Mod}(\Dx)$ and the category of $\Dl$-modules $\operatorname{Mod}(\Dl)$.
	\end{lemma}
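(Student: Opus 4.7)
The plan is to realize this functor as an instance of the standard Morita-type equivalence induced by tensoring with an invertible sheaf. The key observation is that for $\lambda \in \mathbb{Z}$, the twist $\Oll = \Oc_{\lambda-1}$ is an honest line bundle on $X = \mathbb{CP}^1$ (corresponding to the gluing datum $\psi_{\lambda-1}$ of \eqref{small psi}), so it possesses a genuine inverse $\Oll^\vee = \Hom_\Ox(\Oll, \Ox) \cong \Oc_{1-\lambda}$, and tensoring with $\Oll$ is already an autoequivalence of $\operatorname{Mod}(\Ox)$ with quasi-inverse $\Oll^\vee \otimes_\Ox -$.

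First, I would establish a natural isomorphism of sheaves of filtered $\mathbb{C}$-algebras
$$\Dl \cong \Oll \otimes_\Ox \Dx \otimes_\Ox \Oll^\vee,$$
where the right-hand side carries its algebra structure by contraction along the canonical pairing $\Oll^\vee \otimes_\Ox \Oll \to \Ox$ between adjacent tensor factors. Since $\Dl$ is by construction the sheaf of differential operators on the line bundle $\Oll$, conjugation by a local trivialisation of $\Oll$ identifies an order-$n$ twisted differential operator with a section of $\Oll \otimes_\Ox F_n\Dx \otimes_\Ox \Oll^\vee$; this is checked locally on each affine chart $U_0$ and $U_\infty$ using the explicit transition function $\psi_{\lambda-1}$ and the freeness of $\Dx$ over $\Ox$ on vector fields.

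Given the algebra identification, the functor $\Oll \otimes_\Ox - : \operatorname{Mod}(\Dx) \to \operatorname{Mod}(\Dl)$ is defined by letting $s \otimes P \otimes t^\vee \in \Dl$ act on $u \otimes m \in \Oll \otimes_\Ox \M$ via $(s \otimes P \otimes t^\vee) \cdot (u \otimes m) = \langle t^\vee, u\rangle \cdot (s \otimes Pm)$; the symmetric prescription yields a functor $\Oll^\vee \otimes_\Ox - : \operatorname{Mod}(\Dl) \to \operatorname{Mod}(\Dx)$. These are mutually quasi-inverse via the canonical trivialisations $\Oll \otimes_\Ox \Oll^\vee \cong \Ox$ and $\Oll^\vee \otimes_\Ox \Oll \cong \Ox$, which yield the required natural isomorphisms on each side. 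The main obstacle is purely notational bookkeeping, namely confirming the filtered compatibility of the algebra isomorphism; once that is in hand, the remainder of the proof is a formal manipulation of tensor products with an invertible sheaf.
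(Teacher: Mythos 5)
Your proposal is correct and matches the paper's (essentially unstated) justification: the paper derives the lemma directly from the identification $\Dl = \operatorname{Diff}(\Oll,\Oll)$ for integral $\lambda$, which is precisely your isomorphism $\Dl \cong \Oll \otimes_\Ox \Dx \otimes_\Ox \Oll^\vee$, and the rest is the standard Morita-type equivalence for an invertible sheaf that you spell out. You have simply filled in the details the paper leaves implicit.
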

	
	Locally, the sheaf $\Dl$ is isomorphic to $\Dx$, so that \begin{equation}\label{glue}
			\G(U_0, \Dl) = \mathbb{C}[z,\p_z] \qquad \& \qquad \G(U_\infty, \Dl) = \mathbb{C}[w,\p_w]
	\end{equation} as per Example \ref{affine}. Moreover, $\Dl$ is uniquely determined by the unique ring automorphism $$\Psi_\lambda : \G(\ins, \Dl) \to \G(\ins, \Dl)$$ which agrees with $\psi_{\lambda-1}$, in the sense that for any $P \in \Gamma(\ins, \Dl)$, we have $$\psi_\lambda(P) \circ \psi_{\lambda-1} = \psi_{\lambda-1} \circ P$$ (here $P$ is considered as an element of $\G(\ins, \Dl)$ on the left and as a $\G(\ins, \Oll)$-endomorphism on the right). 

	To realise the Beilinson--Bernstein correspondence, we need first to compute the map $$\Phi_\lambda : U(\g) \longrightarrow \G(X, \Dl)$$ (see section 1) which specifies the $\g$-module structure on $\G(X, \Dl)$. Using the definition sheaves as equalisers, we may write
	\begin{equation}\label{localglobal}
		\G(X, \Dl) = \{(P,Q) \in \G(U_0, \Dl) \times \G(U_\infty, \Dl) : P = \Psi_\lambda(Q)\}
	\end{equation} so it will suffice to describe the image of $\Phi_\lambda$ on the charts $U_0$ and $U_\infty$. Using equations (\ref{diffexp}) and (\ref{glue}), we may compute the action of $\Phi_\lambda$ on the standard basis of $\g$, finding that 
	\begin{equation}\label{global}
		\Phi_\lambda(E) = (E_0, E_\infty), \qquad \Phi_\lambda(F) = (F_0, F_\infty), \qquad \Phi_\lambda(H) = (H_0, H_\infty)
	\end{equation} where 
	\begin{equation}\label{infinitesimal}
		E_0 = z^2\p^z - z(\lambda -1), \qquad F_0 = -\p^z, \qquad H_0 = 2z\p^z - (\lambda-1)
	\end{equation}
	and
	\begin{equation}\label{infinity}
		E_\infty = -\p^w, \qquad F_\infty = w^2\p^w - w(\lambda -1), \qquad H_\infty = -2w\p^w + (\lambda-1)
	\end{equation}
	and we refer the reader to section 4 of \cite{romanov} for the details of this calculation.
	\section{At most $2$ regular singularities}
	In this section we provide a characterisation of the $\slt$-representations (with dominant regular central character) that correspond to holonomic $\D$-modules on $X = \mathbb{CP}^1$ with at most 2 regular singularities via Beilinson--Bernstein localisation. We will proceed topologically: we will first classify all the monodromy representations with the prescribed amount of singularities, then translate to holonomic regular $\D$-modules using the Riemann--Hilbert correspondence, and finally to $\slt$-representations via Beilinson--Bernstein localisation. 
	\subsection*{The case of $\Oc$-coherent $\D$-modules with no singularities}
	The simplest case is that of $\D$-modules with no singularities at all. Via the Riemann--Hilbert correspondence (see the argument in section 2 for details), these correspond to finite-dimensional $\pi_1(X)$-representations; since $X = \mathbb{CP}^1$ is simply connected, such a representation consists only of a choice of finite-dimensional complex vector space $L \cong \mathbb{C}^n$ with trivial action of $\pi(X) = 0$. As per (\ref{inverse RH}), this retrieves the modules $$\M_n = \Ox \otimes_{\mathbb{C}_X} \mathbb{C}_X^n \cong \Ox^{\oplus n}$$ whereon $\D_X$ acts as $\nabla(f_1, \ldots, f_n) = (df_1, \ldots, df_n)$. In the twisted case, the analogous argument yields the $\Oll$-coherent $\Dl$-modules $$\M_{\lambda,n} = \Oll^{\oplus n}$$ whereon $\Dl$ acts via the twist $\Psi_\lambda$. These are all the possible $\Oll$-coherent $\Dl$-modules on $X$.
	
	We would now like to describe the $\g$-representations that these $\Dx$-modules correspond to under Beilinson--Bernstein localisation. Since the global sections functor commutes with finite direct sums, we will describe only the case $n=1$; the others will follow as the direct sum of these representations:
	\begin{equation}
	    \G(X,\M_{\lambda, n}) \cong \bigoplus_{i=1}^n\G(X,\Oll). \label{ndirsum}
	\end{equation} Using the definition of sheaves as equalisers, we find that
	\begin{align}
		\G(X, \Oll) &= \{(f,g) \in \G(U_0, \Oll) \times \G(U_\infty, \Oll) : f = \psi_\lambda(g)\}\nonumber \\
		&= \{(f,g) \in \mathbb{C}[z] \times \mathbb{C}[w] : f = \psi_\lambda(g)\} \label{equaliser}
	\end{align} where $f = \psi_\lambda(g)$ unfolds as $f(z) = z^{\lambda-1}g(w) = z^{\lambda-1}g(z^{-1})$ using equation (\ref{small psi}). For this condition to be satisfied, $g$ must be of degree at most $\lambda - 1$, and to any such $g$ we may associate a unique $f \in \mathbb{C}[z]$ satisfying the condition. Thus we can identify $$\G(X, \Oll) \cong \mathbb{C}[w]_{\leq \lambda-1} = \bigoplus_{k=0}^{\lambda -1} \mathbb{C} \cdot w^k$$ and $\g$ acts on $\G(X, \Oll)$ using the description in equation (\ref{infinity})\footnote{A priori, $\g$ acts on the global sections via equation (\ref{global}), with (\ref{infinitesimal}) corresponding to the chart $U_0$ and (\ref{infinity}) to the chart $U_\infty$. However, in this case, global sections are uniquely determined by their restriction to $U_\infty$, so it suffices to describe the $\g$-module structure there. The general distinction will become clear in our subsequent examples.}. Explicitly, we obtain
	\begin{equation}
		E_\infty = -kw^{k-1}, \qquad F_\infty = (k-\lambda+1)w^{k+1}, \qquad H_\infty = (\lambda-1 -2k)w^k \qquad \label{findimeq}
	\end{equation}
	and we visualise this action in Figure \ref{fig:findim}. We identify this representation as the unique irreducible $\slt$-module of dimension $\lambda$ which is given by the highest weight. We conclude:
	\begin{theorem}
		Beilinson--Bernstein localisation restricts to an equivalence of:
		\begin{enumerate}[(a)]
			\item the category of $\Oll$-coherent $\Dl$-modules on $X = \mathbb{CP}^1$ with no singularities; and
			\item the subcategory of $\slt$-representations with central character $\chi_\lambda$ which is generated by the irreducible finite-dimensional representation of highest weight $\lambda -1$ via direct sums.
		\end{enumerate} 
	\end{theorem}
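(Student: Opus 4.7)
The plan is to stitch together three correspondences: Riemann--Hilbert to classify the objects on the $\D$-module side topologically, an explicit global sections computation to name the corresponding $\slt$-module, and Beilinson--Bernstein localisation (Theorem \ref{BBL}) to upgrade the object-level match into an equivalence of full subcategories. The three steps mirror the three columns of the dictionary table introduced earlier in the paper.

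For the classification step, I would invoke the Riemann--Hilbert argument surrounding (\ref{inverse RH}): an $\Oll$-coherent $\Dl$-module with no singularities on $X$ corresponds to a finite-dimensional monodromy representation of $\pi_1(X)$. Since $X = \mathbb{CP}^1$ is simply connected, every such representation is trivial and determined by the underlying vector space $\mathbb{C}^n$; the inverse construction (\ref{inverse RH}), combined with the twisting by $\Oll$ indicated in Lemma \ref{equivtwist}, recovers exactly the modules $\M_{\lambda,n} = \Oll^{\oplus n}$. This realises (a) as the full subcategory generated by $\Oll$ under finite direct sums.

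For the representation-theoretic identification, by (\ref{ndirsum}) it suffices to treat $n=1$. Using the equaliser description (\ref{equaliser}) together with (\ref{small psi}), the compatibility condition $f(z) = z^{\lambda-1} g(z^{-1})$ forces $\deg g \leq \lambda-1$ and then determines $f$ uniquely, giving $\G(X, \Oll) \cong \mathbb{C}[w]_{\leq \lambda-1}$. Since $\Oll$ is a line bundle on an irreducible variety, the restriction map from $\G(X, \Oll)$ to $\G(U_\infty, \Oll)$ is injective, so the $\slt$-action is visible directly from the formulas (\ref{infinity}), yielding (\ref{findimeq}). A direct check that $w^{\lambda-1}$ is annihilated by $E_\infty$ with $H$-weight $\lambda-1$, combined with nonvanishing of $F_\infty$ on $w^k$ for $0 \leq k < \lambda - 1$, identifies $\G(X, \Oll)$ with the unique irreducible $\slt$-module of highest weight $\lambda-1$.

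The final step, and the only subtle one, is to promote these pointwise matches into an equivalence of categories. Invoking Theorem \ref{BBL}, the functors $\G(X,-)$ and $\Dl \otimes_{U\g} -$ are mutually inverse additive equivalences, and hence restrict to an equivalence between any full subcategory on one side and its essential image on the other. The main obstacle is to verify that the full subcategory on the $\D$-module side generated by $\Oll$ under direct sums maps to the full subcategory on the representation side generated by the highest weight module; this reduces to the observation that the essential image under an additive equivalence of a closure under finite direct sums is again a closure under finite direct sums, so the distinguished generators on each side suffice. Agreement of morphism spaces is then automatic: $\Oll$ is a simple $\Dl$-module and its image is a simple $\slt$-module, so $\operatorname{Hom}$ spaces between $n$- and $m$-fold direct sums are $m \times n$ complex matrices on both sides.
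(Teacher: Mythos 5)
Your proposal follows essentially the same route as the paper: Riemann--Hilbert together with the simple connectedness of $\mathbb{CP}^1$ classifies the nonsingular modules as $\M_{\lambda,n} = \Oll^{\oplus n}$, the equaliser computation (\ref{equaliser}) identifies $\G(X,\Oll)$ with $\mathbb{C}[w]_{\leq \lambda-1}$ carrying the action (\ref{findimeq}), and the equivalence of Theorem \ref{BBL} restricts to the subcategories closed under finite direct sums (your extra care with essential images and $\operatorname{Hom}$-spaces only makes explicit what the paper leaves implicit). One small correction to your irreducibility check: by (\ref{findimeq}) the vector annihilated by $E_\infty$ with $H$-weight $\lambda-1$ is $w^0 = 1$, whereas $w^{\lambda-1}$ is the \emph{lowest} weight vector (annihilated by $F_\infty$, of $H$-weight $1-\lambda$); with that swap, and with $F_\infty$ nonvanishing on $w^k$ for $0 \leq k < \lambda-1$ as you note, the identification with the irreducible module of highest weight $\lambda-1$ goes through.
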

	\begin{figure}
		\centering
		\[
		\begin{tikzcd}
			1^{\textcolor{white}{0}} \arrow[loop above, "\lambda-1", color = Cyan] \arrow[r, "1-\lambda", color = Green, bend left = 15]&
			w^{\textcolor{white}{1}} \arrow[loop above, "\lambda-3", color = Cyan]  \arrow[l, "-1", color = Red, bend left = 15] \arrow[r, "2-\lambda", color = Green, bend left = 15]&
			w^2 \arrow[loop above, "\lambda-5", color = Cyan] \arrow[l, "-2", color = Red, bend left = 15]\arrow[r, "3-\lambda", color = Green, bend left = 15] &
			{\ldots \, \,} \arrow[l, "-3", color = Red, bend left = 15] \arrow[r, "-2", color = Green, bend left = 15]&
			% w^{\lambda-2}  \arrow[loop above, "3-\lambda", color = Cyan]\arrow[r, "-1", color = Green, bend left = 15] \arrow[l, "2-\lambda", color = Red, bend left = 15]&
			w^{\lambda -1}  \arrow[loop above, "1-\lambda", color = Cyan]\arrow[l, "1-\lambda", color = Red, bend left = 15]
		\end{tikzcd}
		\]
		\caption{The $\g$-module structure on $\G(X,\Oll)$. Coloured arrows correspond to the action of the $\slt$-basis \textcolor{Red}{$E$},\textcolor{Green}{$F$} and \textcolor{Cyan}{$H$} on the basis $\{w^k\}_{k=0}^{\lambda - 1}$ of $\G(X,\Oll) \cong \G(U_\infty, \Oll)$, with multiplicity written as the arrows' labels.}
		\label{fig:findim}
	\end{figure}
	
	\subsection*{The case of one regular singularity}
	As a next step, we'd like to classify the $\slt$-representations which correspond to holonomic $\Dl$-modules with exactly $1$ singularity. Let $\M$ be a holonomic $\D$-module with exactly 1 regular singularity, and assume w.l.o.g. that this singularity lies at the point $0 = [1:0]$. It will then be natural for us to work on the stratification
	\begin{equation}
	    X = U_\infty \amalg \{0\}. \label{stratfcone}
	\end{equation}
	Via the Riemann--Hilbert correspondence (see section 2), $\M$ corresponds to a choice of $\pi_1(U_\infty)$-representation $\M_U$ representing the restriction of $\M$ and a choice of finite-dimensional complex vector space $M_0$ representing the stalk at $0$. As $U_\infty = X \backslash \{0\} \cong \mathbb{C}$ is simply connected, an argument analogous to the one in the preceding subsection reveals that $\M_U \cong \Oll^{\oplus n}$ for some $n \in \mathbb{N}$.
	
	By Theorem \ref{comp series}, the category of holonomic $\Dl$-modules $\M$ with but one regular singularity at $0$ is generated via composition series by the $\Dl$-modules $\E(U_\infty, j_\infty^\dagger \M)$ and $\E(\{0\}, i_0^\dagger \M)$ (recall that notation for all embeddings has been fixed in Section 3). The former is a proper submodule of $(j_\infty)_+(\Oll)$, while the latter is isomorphic to $(i_0)_+(\mathbb{C})$ by Lemma \ref{adjoint}(d). To describe $\slt$-representations with exactly one regular singularity, it will hence suffice to describe the $\g$-representation structure on $(j_\infty)_+(\Oll)$  and $(i_0)_+(\mathbb{C})$ obtained from the Beilinson--Bernstein correspondence.
	 
	 We begin with $\M_0 = (i_0)_+(\mathbb{C})$. As this module is entirely supported in the chart $U_0$, we must have $$\G(X, \M_0) \cong \G(U_0, \M_0)$$ and it will suffice to work in the chart $U_0$ only. Unfolding the definition of the direct image functor (cf. the introduction), we find:
	 \begin{align}
	 	 \G(U_0, (i_{00})_+(\mathbb{C})) &= \G(U_0, \D_{U_0 \leftarrow V}) \otimes_{\mathbb{C}} \mathbb{C} \nonumber \\
	 	 &\cong \mathbb{C} \otimes_{\G(U_0, \Oc_{U_0})} \G(U_0,\D_{U_0}) \nonumber \\
	 	 &\cong \frac{\mathbb{C}[z]}{z \cdot \mathbb{C}[z]} \otimes_{\mathbb{C}[z]} \mathbb{C}[z, \p^z] \nonumber \\
	 	 &\cong \frac{\mathbb{C}[z, \p^z]}{z \cdot \mathbb{C}[z, \p^z]} \nonumber \\
	 	 &\cong \bigoplus_{k=0}^\infty \p_z^k \delta_0 \label{deltazero}
	 \end{align} where $\delta_0$ denotes the Dirac indicator function at $z = 0$. Here $\D_{U_0}$ acts by multiplication on the right. Choosing the basis 
 	\begin{equation}
 		\left\{\delta_0^{(k)} = \frac{1}{k!} \p_z^k \delta_0\right\}_{k=0}^\infty \label{basis delta}
 	\end{equation} we apply equation (\ref{infinitesimal}) to obtain the $\g$-action on this vector space. We refer the reader to Section 6 of \cite{romanov} for details of the calculation. We obtain:
 	$$E \cdot \delta_0^{(k)} = (\lambda + k)  \delta_0^{(k-1)}, \qquad
 	F \cdot \delta_0^{(k)} = -(k+1)  \delta_0^{(k+1)}, \qquad
 	H \cdot \delta_0^{(k)} = -(\lambda  1 + 2k)  \delta_0^{(k)} $$
 	for $k > 0$ and
 	$$E \cdot \delta_0 = 0, \qquad
 	F \cdot \delta_0= 0 ,\qquad
 	H \cdot \delta_0= -(\lambda + 1)  \delta_0^{(k)}$$
 	for $k=0$. We illustrate the result in Figure \ref{fig:Verma} and identify this as the irreducible Verma module of highest weight $-\lambda-1$.\\
 	
 	\begin{figure}
 		\centering
 		\[
 		\begin{tikzcd}
 			\delta_0^{\textcolor{white}{(0)}} \arrow[loop above, "-\lambda-1", color = Cyan] \arrow[r, "-1", color = Green, bend left = 15]&
 			\delta_0^{(1)} \arrow[loop above, "-\lambda-3", color = Cyan]  \arrow[l, "\lambda+1", color = Red, bend left = 15] \arrow[r, "-2", color = Green, bend left = 15]&
 			\delta_0^{(2)} \arrow[loop above, "-\lambda-5", color = Cyan] \arrow[l, "\lambda+2", color = Red, bend left = 15]\arrow[r, "-3", color = Green, bend left = 15] &
 			\delta_0^{(3)} \arrow[l, "\lambda + 3", color = Red, bend left = 15] \arrow[r, "-4", color = Green, bend left = 15]  \arrow[loop above, "-\lambda-7", color = Cyan]&
 			{\ldots \, \,} \arrow[l, "\lambda + 4", color = Red, bend left = 15]
 			%\delta_0^{(k)}  \arrow[loop above, "-\lambda-1-2k", color = Cyan]\arrow[r, "-k-1", color = Green, bend left = 15] \arrow[l, "\lambda+k", color = Red, bend left = 15]&
 			%{\ldots \, \,}  \arrow[l, "\lambda+k+1", color = Red, bend left = 15]
 		\end{tikzcd}
 		\]
 		\caption{The $\g$-module structure on $\G(X,\M_0)$. Coloured arrows correspond to the action of the $\slt$-basis \textcolor{Red}{$E$},\textcolor{Green}{$F$} and \textcolor{Cyan}{$H$} on the basis (\ref{basis delta}) of $\G(X,\M_0) \cong \G(U_0,\M_0)$, with multiplicity written as the arrows' labels.}
 		\label{fig:Verma}
 	\end{figure}
 
  	Consider now the $\Dx$-module $\M_U = (j_\infty)(\Oll)$ attached to the open stratum $U_\infty$. In the chart $U_\infty$, we simply have $$\G(U_\infty, (j_\infty)_+(\Oll)) = \G(U_\infty, \Oll) \cong \mathbb{C}[w]$$ with the same action as in the example in the preceding section, i.e. given by equation (\ref{findimeq}). The difference here is of course that this representation has infinite dimension. Contrary to the example of $(i_{00})_+(\mathbb{C})$, this module is supported in both charts, so a priori we should also describe its $\g$-action on the other chart, in this case $U_0$. However, note that
  	\begin{align*}
  		\G(U_0, \M_U) &= (j_{00})_+(j_{\infty\infty})_+(\G(U_\infty, \Oll)) \\
  		&= (j_{00})_+(\G(\ins, \Oll) \otimes_{\G(U_\infty, \Oll)}\G(U_\infty, \Oll)) \\
  		&= (j_{00})_+(\mathbb{C}[w,w^{-1}] \otimes_{\mathbb{C}[w]} \mathbb{C}[w]) \\
  		&= (j_{00})_+(\mathbb{C}[w,w^{-1}]) \\
  		&= \mathbb{C}[w,w^{-1}]
  	\end{align*}
  	as a complex vector space. Using the same argument as in (\ref{equaliser}), we may deduce:
  	\begin{align*}
  		\G(X, \M_U) &= \{(f,g) \in \G(U_0, \M_U) \times \G(U_\infty, \M_U) : f = \psi_\lambda(g)\} \\
  		&\cong \{(f,g) \in \mathbb{C}[z,z^{-1}] \times \mathbb{C}[w] : f(z^{-1}) = g(z)\} \\
  		&\cong \mathbb{C}[w]
  	\end{align*}
  where the last isomorphism is projection onto the second factor. Hence again the global sections are entirely determined by the sections on one chart---this time $U_\infty$---and there is no need for us to compute the action of $\g$ on the sections on $U_0$. Instead we note that as $\g$-modules, we have $$\G(X, \M_U) \cong \G(U_\infty, \M_U) \cong \mathbb{C}[w]$$ with $\g$-action given by equation (\ref{findimeq}). We illustrate this $\slt$-representation in Figure \ref{fig:dualVerma} and identify it as the reducible dual Verma module of highest weight $\lambda - 1$. 
  
  Note in particular that this module has a unique submodule spanned by the set $\{1,w,\ldots, w^{\lambda-1}\}$ which is isomorphic to the irreducible finite-dimensional representation of highest weight $\lambda - 1$, constructed in the preceding subsection (cf. Figure \ref{fig:findim}). This reflects the fact that unlike in the case of the closed embedding $i_0$, the (improper) open embedding $j_\infty$ induces different functors $\int_{i_0!}$ and $\int_{i_0}$, so the minimal extension is a proper submodule of the direct image.
  
  As the dual Verma has a unique irreducible submodule, this completes the characterisation. We have arrived at:
  
  \begin{absolutelynopagebreak}
    \begin{theorem} \label{onesing}
  	 Let $\lambda \in \mathbb{Z}_{>0}$ and $\M$ be a holonomic $\Dl$-module on $X = \mathbb{CP}^1$ with exactly one regular singularity. Consider the $\slt$-representation $V = \G(X,\M)$ corresponding to $\M$. There exists a composition series of $\slt$-representations $$0 \leq V_n \leq V_{n-1} \leq \ldots \leq V_1 = V$$ such that each quotient $\frac{V_i}{V_{i+1}}$ is one of the following:
  	\begin{enumerate}[(a)]
  		\setlength\itemsep{0.1em}
  		\item the irreducible Verma module of highest weight $-\lambda - 1$;
  		\item the irreducible finite-dimensional representation of highest weight $\lambda - 1$. 
  	\end{enumerate}
  \end{theorem}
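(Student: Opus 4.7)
The plan is to apply Theorem \ref{comp series} to $\M$, argue that only the strata $U_\infty$ and $\{0\}$ can appear, identify the corresponding simple quotients via the explicit computations already carried out in this subsection, and finally descend to the representation side using the exactness of $\G(X,-)$ guaranteed by Theorem \ref{BBL}.

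First I would apply Theorem \ref{comp series} to obtain a series $0 \trianglelefteq \M_n \trianglelefteq \cdots \trianglelefteq \M_1 = \M$ whose successive quotients are minimal extensions $\E(A_i, \N_i)$ for some locally closed $A_i \subseteq X$ and simple $\Oc_{A_i}$-coherent $\D_{A_i}$-modules $\N_i$. Since $\M$ has regular singularities only at $0$, the same must hold for every subquotient; in particular any simple factor minimally extended from a point $p \in U_\infty$ would manifest as a singularity at $p$, forcing $A_i \in \{U_\infty, \{0\}\}$ throughout the series.

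Next I would classify the $\N_i$. When $A_i = \{0\}$, simplicity forces $\N_i \cong \mathbb{C}$. When $A_i = U_\infty$, the Riemann--Hilbert argument of Section 2 (combined with Lemma \ref{equivtwist}) identifies simple $\Oc_{U_\infty}$-coherent $\Dl|_{U_\infty}$-modules with irreducible finite-dimensional representations of $\pi_1(U_\infty)$; since $U_\infty \cong \mathbb{A}^1$ is simply connected, only the trivial representation appears and $\N_i \cong \Oll|_{U_\infty}$. Identifying the corresponding global sections then reduces to the two examples just worked out. For $A_i = \{0\}$, the immersion $i_0$ is proper, so by Lemma \ref{adjoint}(d) we have $\E(\{0\},\mathbb{C}) = (i_0)_+\mathbb{C}$, and Figure \ref{fig:Verma} identifies $\G(X, (i_0)_+\mathbb{C})$ with the irreducible Verma of highest weight $-\lambda-1$. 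For $A_i = U_\infty$, the minimal extension $\E(U_\infty, \Oll|_{U_\infty})$ embeds as the unique simple $\Dl$-submodule of $(j_\infty)_+ \Oll$, and Figure \ref{fig:dualVerma} together with the preceding discussion identifies this submodule with the irreducible finite-dimensional representation of highest weight $\lambda-1$. Since $\lambda$ is dominant and regular, the functor $\G(X,-)$ is exact by Theorem \ref{BBL}, so applying it termwise to the composition series on $\M$ yields the required series on $V$.

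The main obstacle I anticipate is making the stratum restriction fully watertight, since Theorem \ref{comp series} only guarantees the existence of \emph{some} composition series with minimal-extension quotients. The cleanest formulation uses the characteristic variety: the singularity assumption translates to $\operatorname{Ch}(\M) \subseteq T^*_X X \cup T^*_0 X$; this inclusion passes to subquotients, so each simple factor must be either supported on all of $X$ (forcing extension from the open stratum $U_\infty$, i.e.\ $A_i = U_\infty$) or supported at the single point $0$ (forcing $A_i = \{0\}$), and no other strata can arise.
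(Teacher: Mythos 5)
Your proposal is correct and follows essentially the same route as the paper's own proof: apply Theorem \ref{comp series} over the stratification $U_\infty \amalg \{0\}$, identify the two possible simple minimal extensions with the Verma module of Figure \ref{fig:Verma} and the finite-dimensional submodule of the dual Verma in Figure \ref{fig:dualVerma}, and transport the composition series through the exact functor $\G(X,-)$. Your characteristic-variety justification for why only these two strata can occur is a welcome tightening of a step the paper leaves implicit, but it does not change the overall argument.
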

   \end{absolutelynopagebreak}
 \begin{proof}
    We may assume w.l.o.g. that $\M$ has no singularities away from the point $0 = [1:0]$. By Theorem \ref{comp series}, any holonomic $\Dx$-module with one regular singularity at $\{0\}$ admits a composition series whose quotients are minimal extensions of $\Oc$-coherent $\D$-modules on the strata of (\ref{stratfcone}). By Lemma \ref{equivtwist}, the same holds for $\Dl$-modules in the case where $\lambda$ is an integer. Hence there exists a composition series of $\Dl$-modules
    \begin{equation*}
        0 \trianglelefteq \M_n \trianglelefteq \M_{n-1} \trianglelefteq \ldots \trianglelefteq \M_1 = \M \label{modcompser}
    \end{equation*} such that each quotient $\frac{\M_i}{\M_{i+1}}$ is a minimal extension of a $\D$-module on $U_\infty$ or $\{0\}$. By the Beilinson--Bernstein correspondence (Theorem \ref{BBL}), the functor $\G(X, -)$ is an equivalence of categories, and in particular it is an exact functor. This means it preserves inclusions and quotients, so applying $\G(X, -)$ to (\ref{modcompser}) yields the composition series of $\slt$-representations $$0 \leq \G(X, \M_n) \leq \G(X, \M_{n-1}) \leq \ldots \leq \G(X, \M_{1}) = \G(X, \M)$$ whose quotients are the representations corresponding to the (simple) minimal extensions of $\Oc$-coherent $\D$-modules on $U_\infty$ or $\{0\}$. In the latter case, this retrieves the representation in Figure \ref{fig:Verma}, which is the irreducible Verma module of highest weight $-\lambda - 1$, i.e. option (a). In the former case, it must be an irreducible submodule or irreducible quotient of the representation in Figure \ref{fig:dualVerma}, which is the reducible dual Verma module of highest weight $\lambda - 1$. This representation has a unique irreducible submodule, option (b), and we find that its unique irreducible quotient is isomorphic to the irreducible Verma module of highest weight $-\lambda - 1$, i.e. option (a) again.
 \end{proof}
  \begin{figure}
  	\centering
  	\[
  	\begin{tikzcd}
  		1^{\textcolor{white}{0}} \arrow[loop above, "\lambda-1", color = Cyan] \arrow[r, "1-\lambda", color = Green, bend left = 15]&
  		w^{\textcolor{white}{1}} \arrow[loop above, "\lambda-3", color = Cyan]  \arrow[l, "-1", color = Red, bend left = 15] \arrow[r, "2-\lambda", color = Green, bend left = 15]&
  		w^2 \arrow[loop above, "\lambda-5", color = Cyan] \arrow[l, "-2", color = Red, bend left = 15]\arrow[r, "3-\lambda", color = Green, bend left = 15] &
  		w^3 \arrow[loop above, "\lambda-7", color = Cyan] \arrow[l, "-3", color = Red, bend left = 15]\arrow[r, "4-\lambda", color = Green, bend left = 15] &
  		{\ldots \, \,} \arrow[l, "-4", color = Red, bend left = 15]
  	\end{tikzcd}
  	\]
  	\caption{The $\g$-module structure on $\G(X,\M_U)$. Coloured arrows correspond to the action of the $\slt$-basis \textcolor{Red}{$E$},\textcolor{Green}{$F$} and \textcolor{Cyan}{$H$} on the basis $\{w^k\}_{k=0}^{\lambda - 1}$ of $\G(X,\M_U) \cong \G(U_\infty,\M_U)$, with multiplicity written as the arrows' labels.}
  	\label{fig:dualVerma}
  \end{figure}
 
 \subsection*{The case of two regular singularities}
 Finally, we tend to the most general case of two regular singularities, w.l.o.g. at the points $0 = [1:0]$ and $\infty = [0:1]$. The natural choice of stratification is then \begin{equation}
     X = \{0\} \amalg (\ins) \amalg \{\infty\} \label{stratfc}
 \end{equation} and we adopt the same strategy for studying this case as in the previous example, i.e. computing the direct image of $\D$-modules supported at each stratum.
  Observe that this stratification corresponds to the orbits of the action by the Lie subgroup
  $$K = \left\{ \begin{pmatrix}
  	a & 0 \\
  	0 & a^{-1}
  \end{pmatrix} : \mathbb{C}^\times \right\} \cong \mathbb{C} \otimes_\mathbb{Z} \operatorname{SL}_2(\mathbb{R}) \subseteq G$$
which acts by restriction of the $G$-action; consequently, we can expect to characterise the corresponding representations as certain admissible representations of $\operatorname{SL}_2(\mathbb{R})$ (cf. \cite{milicic} and \cite{lusztig}).

The case of the closed stratum $\{0\}$ is the same as the argument in the previous subsection (recall that the global sections were entirely determined on $U_0$, so nothing has changed from the perspective of this stratum). The only $\D$-module associated to this stratum is $(i_0)_+(\{0\})$ and the $\slt$-representation structure on the global sections thereof is shown in Figure \ref{fig:Verma}. This irreducible Verma module turns out to be the Harish-Chandra module of the irreducible holomorphic discrete series representation of $\operatorname{SL}_2(\mathbb{R})$ of highest weight $-\lambda -1$.

 Symmetrically, the global sections of the $\D$-module attached to the other closed stratum $\{\infty\}$, that is, $(i_\infty)_+(\{\infty\})$, admit as basis the derivatives of the Dirac distribution $\delta_\infty$ at the point $\infty$, and its structure is that of the representation at $\{0\}$, merely interchanged by $\Psi_\lambda$. We spare the reader the details of this (almost identical) calculation and exhibit its result in Figure \ref{fig:antiholomorphic}. We can identify this representation as the graded dual of the irreducible Verma module of highest weight $-\lambda -1$. Its weights are bounded from below, so we say that it has \textit{lowest weight} $\lambda + 1$. This representations turns out to correspond to the irreducible anti-holomorphic discrete series representation of $\operatorname{SL}_2(\mathbb{R})$ of lowest weight $\lambda + 1$. 

\begin{figure}
	\centering
	\[
	\begin{tikzcd}
		\delta_\infty^{\textcolor{white}{(0)}} \arrow[loop above, "\lambda+1", color = Cyan] \arrow[r, "\lambda+1", color = Green, bend left = 15]&
		\delta_\infty^{(1)} \arrow[loop above, "\lambda+3", color = Cyan]  \arrow[l, "-1", color = Red, bend left = 15] \arrow[r, "\lambda+2", color = Green, bend left = 15]&
		\delta_\infty^{(2)} \arrow[loop above, "\lambda+5", color = Cyan] \arrow[l, "-2", color = Red, bend left = 15]\arrow[r, "\lambda+3", color = Green, bend left = 15] &
		\delta_\infty^{(3)} \arrow[l, "-3", color = Red, bend left = 15] \arrow[r, "\lambda+4", color = Green, bend left = 15]  \arrow[loop above, "\lambda+7", color = Cyan]&
		{\ldots \, \,} \arrow[l, "-4", color = Red, bend left = 15]
		%\delta_0^{(k)}  \arrow[loop above, "-\lambda-1-2k", color = Cyan]\arrow[r, "-k-1", color = Green, bend left = 15] \arrow[l, "\lambda+k", color = Red, bend left = 15]&
		%{\ldots \, \,}  \arrow[l, "\lambda+k+1", color = Red, bend left = 15]
	\end{tikzcd}
	\]
	\caption{The $\g$-module structure on $\G(X,(i_\infty)_+(\mathbb{C}))$. Coloured arrows correspond to the action of the $\slt$-basis \textcolor{Red}{$E$},\textcolor{Green}{$F$} and \textcolor{Cyan}{$H$} on a basis of $\G(X,(i_\infty)_+(\mathbb{C})) \cong \G(U_0,(i_\infty)_+(\mathbb{C}))$, with multiplicity written as the arrows' labels.}
	\label{fig:antiholomorphic}
\end{figure}

Finally, we consider the open stratum $\ins \cong \mathbb{C}^\times$.  Via the Riemann--Hilbert correspondence, $\Oll$-coherent $\Dl$-modules on $\ins$ are indexed by finite-dimensional complex representations $$\rho : \pi_1(\ins) \cong \mathbb{Z} \longrightarrow  \GL(L).$$ Now $\pi_1(\ins) \cong \mathbb{Z} = \langle 1\rangle$, so the data of such a representation is equivalent to a choice of vector space $L \cong \mathbb{C}^n$ and an invertible $n\times n$ complex matrix $T = \rho(1)$. A universal covering space of $\ins = \mathbb{C}^\times$ is given by the complex exponential map $\operatorname{exp} : \mathbb{C} \to \mathbb{C}^\times$. Now $ \pi_1(\ins) \cong \mathbb{Z}$ acts on $\mathbb{C}$ via deck transformations, i.e. as $$n \cdot z = z + 2i \pi  n$$ and on $L = \mathbb{C}^n$ as $\rho(n) = T^n$. Let $\M_T$ be the corresponding $\Oc$-coherent $\D$-module on $\ins$, i.e. $$\M_T = \Oll \otimes_{\mathbb{C}} L \cong \Oll^{\oplus n}$$ by \ref{inverse RH}. We aim to find the $\D$-module structure on $\M_T$ explicitly by constructing an inverse to the de Rham functor in this context. Recall that $\M_T$ is described fully as an $\Oc$-module with connection $$\nabla_T : \Oll^{\oplus n} = \M_T \longrightarrow \Omega^1_{\ins} \otimes_{\Oll} \M_T \cong (\Omega^1_{\ins})^{\oplus n}.$$ As $\ins$ is contained fully in both charts, we may work locally, say on the chart $U_0$, where we can choose a trivialisation $$\nabla_T \cong \operatorname{d} - \frac{A(z) \operatorname{dz}}{z}$$ for some matrix $A(z)$ whose entries are regular functions. An analogous result holds on $U_\infty$. Via the Riemann--Hilbert correspondence, this corresponds to the local system $$\ker(\nabla_T) = \{(f_1, \ldots, f_n) : (df_1, \ldots, df_n) = A\cdot \tfrac{1}{z}(f_1, \ldots, f_n)\}$$ which, morally, is the sheaf of solutions to the differential equation $\operatorname{d}\vec{z} = A(z)\vec{z}$. Embedding $\Oll$ in the endomorphism sheaf as in Section 1 (via the map $f \mapsto f \cdot - $), we can equivalently view each $f_i$ as an $n \times n$ matrix $F_i$ with entries in $\Oll$, i.e. $$\ker(\nabla_T) = \{(F_1, \ldots, F_n) : (dF_1, \ldots, dF_n) = A \cdot(zI_n)^{-1}(F_1, \ldots, F_n)\}.$$ We note that the matrix-valued function $z^A = \exp(A\log(z))$ is a section of this sheaf, as $\frac{d}{dz}z^A = Az^{A-I}$ holds (here we are using the complex matrix exponential and logarithm).

Now, $T$ acts on the stalks of this sheaf as the monodromy sections obtained by going around the origin on the counter-clockwise path $t \mapsto e^{2 \pi i t}$. Any two sections will have conjugate monodromy, and conjugate monodromies must correspond to the same connection, since monodromy is precisely a change of basis for $\ker(\nabla_T)$. Thus we may take $T = \exp(2 \pi i A)$, and conversely, $A = \log(T)$ where we are choosing any branch of the multi-valued complex matrix logarithm. We have obtained the $\Oc$-coherent $\D$-module $ \M_T = \Oll^{\oplus n}$ on $\ins$, with connection given in terms of the chart $U_0$ as 
\begin{equation}
    \nabla = d - \frac{\log(T)\operatorname{dz}}{2\pi i \cdot } \label{log}
\end{equation} where $\log$ denotes the complex matrix logarithm.
Now, we'd like to compute the $\slt$-representation which corresponds to the direct image $(j_0)_+(\M_T)$ via Beilinson--Bernstein localisation. We may still work entirely within the chart $U_0$: $j_0$ is an open immersion, so the $\D$-module direct image functors agree with the sheaf-theoretic push-forward. Thus we have:
\begin{align*}
    \G(U_0, (j_{0\infty})_+(\Oll^{\oplus n})) &\cong \G(U_0, (j_{0})_+(\Oll^{\oplus n})) \\
    &\cong \G(U_0, \Oll^{\oplus n}) \\
    &\cong \mathbb{C}[z, z^{-1}]^{\oplus n}
\end{align*}
as sheaves of vector spaces. Similarly, on the chart $U_\infty$, we have $$\G(U_\infty,(j_{0\infty})_+(\Oll^{\oplus n})) \cong \mathbb{C}[w, w^{-1}]^{\oplus n}$$ and the action of $\Dx$ on each vector space is inherited from the action of $\Dx$ on the intersection. These local sections are naturally isomorphic, so indeed $$\G(X, (j_{0\infty})_+(\Oll^{\oplus n})) \cong \mathbb{C}[z,z^{-1}]^{\oplus n}$$ and the analogous statement with $w$ instead of $z$ holds as well.

To compute the $\slt$-representation structure, we may thus use the formulae (\ref{infinitesimal}) with (\ref{log}) substituted for $\p^z$. In the case where the rank is $n = 1$, our monodromy generator $T$ is just a non-zero complex scalar, and we find:
	$$E \cdot z^k = (k+1- \lambda - \tau)z^{k+1}, \quad
 	F \cdot z^k = (\tau - k)z^{k-1}, \quad
 	H \cdot z^k = (2k+1 - \lambda - 2\tau)z^k $$
 where $$\tau =\frac{\log(T)}{2\pi i}$$ and $\log(-)$ denotes any branch of the complex logarithm. We illustrate this representation $V = \G(X, (j_{0\infty})_+(\Oc_{\ins}))$ in Figure \ref{fig:Zl}.
 
  Note first that this $V$ is irreducible unless $\tau$ is an integer, which happens precisely when $T$ is the trivial monodromy. If $\tau$ is indeed an integer then $V$ has an irreducible submodule spanned by $\{z^\tau, \ldots, z^{\lambda  + \tau - 1}\}$ which we identify as the irreducible finite-dimensional representation of highest weight $\lambda - 1$. The quotient of $V$ by this submodule is the direct sum of the representations in Figures \ref{fig:Verma} and \ref{fig:antiholomorphic}, both of which are isomorphic to the irreducible Verma module of highest weight $-\lambda - 1$.
  
  In particular, if $T = \pm I_n$ then $\log(T) = 0$ or $\log(T) = \pi i$, so this monodromy correspond to the connections $$\nabla = d \qquad \text{and} \qquad \nabla = d - \frac{dz}{2z}$$ whose resulting representations are given by Figure \ref{fig:Zl} with $\tau = 0$ and $\tau = \tfrac{1}{2}$, respectively. These are precisely the Harish-Chandra modules associated to the reducible (resp. irreducible) principal series representations of $\operatorname{SL}_2(\mathbb{R})$, as outlined in \cite{romanov}.
  
  For this reason, we shall refer in to case of the representation in Figure \ref{fig:Zl} as the \textit{quasi-admissible} representation of $\slt$ associated to $\tau \in \mathbb{C}$. In the case of $\tau \in \mathbb{Z}$, we call it the (reducible) trivial admissible representation. We are now ready to state and prove our final result:
 
 \begin{theorem}
       Let $\lambda \in \mathbb{Z}_{>0}$ and $\M$ be a holonomic $\Dl$-module on $X = \mathbb{CP}^1$ with two singularities. Consider the $\slt$-representation $V = \G(X,\M)$ corresponding to $\M$. Then there is a decomposition of $\slt$-representations $$V \cong V^{{\neq 1}} \oplus V^{{=1}}$$ where $V^{{\neq 1}} = \G(X, (j_{0\infty})_+(\mathcal{N}))$ for some $\Oc_{\ins}$-coherent $\D_{\ins}$-module $\mathcal{N}$ and $V^{{=1}}$ admits a composition series of $\slt$-representations $$0 \leq V_n \leq V_{n-1} \leq \ldots \leq V_1 = V$$ whose quotients are one of the following:
       \begin{enumerate}[(a)]
  		\setlength\itemsep{0.1em}
  		\item the irreducible Verma module of highest weight $-\lambda - 1$; or
  		\item the graded dual thereof, with lowest weight $\lambda +1$; or
  		\item the irreducible finite-dimensional representation of highest weight $\lambda - 1$.
  	\end{enumerate}
 \end{theorem}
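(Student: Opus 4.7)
The plan is to mirror the strategy of Theorem \ref{onesing}, working now with the three-stratum decomposition (\ref{stratfc}) of $X$, and then to refine the resulting composition series into a direct sum by separating the contributions of nontrivial monodromy on $\ins$.

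First I would apply Theorem \ref{comp series} together with Lemma \ref{equivtwist} to obtain a composition series of $\M$ as a $\Dl$-module whose subquotients are minimal extensions of simple $\Oc$-coherent $\D$-modules on the three strata of (\ref{stratfc}). Since $\G(X,-)$ is an exact equivalence by Theorem \ref{BBL}, this translates directly into a composition series of $V$ whose quotients I must identify stratum by stratum. From $\{0\}$ the unique simple minimal extension is $(i_0)_+(\mathbb{C})$, whose global sections form the irreducible Verma of highest weight $-\lambda-1$ (Figure \ref{fig:Verma}, option (a)); from $\{\infty\}$ the symmetric computation yields the graded dual of lowest weight $\lambda+1$ (Figure \ref{fig:antiholomorphic}, option (b)); from the open stratum $\ins$, simple $\Oc$-coherent $\D$-modules are parametrised by the Jordan datum of a monodromy matrix $T \in \GL(L)$, and the analysis preceding the theorem shows that when no eigenvalue of $T$ equals $1$ the minimal extension coincides with both $(j_{0\infty})_+$ and $(j_{0\infty})_!$ and is irreducible, while when $T$ is rank-one unipotent the minimal extension is the finite-dimensional simple of highest weight $\lambda-1$, yielding option (c).

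To upgrade this filtration to an honest direct sum, I would restrict $\M$ to the open stratum by setting $\Ll := j_{0\infty}^\dagger \M$, and decompose the monodromy acting on its generic fibre into generalised eigenspaces: $\Ll \cong \Ll_1 \oplus \N$, where $\Ll_1$ is the generalised $1$-eigenspace and $\N$ collects the remaining generalised eigenspaces. I would then set $V^{\neq 1} := \G(X, (j_{0\infty})_+(\N))$ and $V^{=1} := V/V^{\neq 1}$.

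The main obstacle, I expect, is proving that this really produces a direct summand. The key point is that $\N$ has no monodromy invariants or coinvariants at either puncture, so the canonical morphism $(j_{0\infty})_!(\N) \to (j_{0\infty})_+(\N)$ is an isomorphism and $(j_{0\infty})_+(\N)$ coincides with its own minimal extension. Standard $\D$-module arguments then imply that $\operatorname{Ext}^1$ between $(j_{0\infty})_+(\N)$ and any simple $\Dl$-module supported at $\{0\}$ or $\{\infty\}$, or of the form $\E(\ins,\Ll_1)$, vanishes: extensions along the open stratum are obstructed because the relevant sets of monodromy eigenvalues are disjoint, while extensions across the punctures are obstructed by the vanishing of monodromy invariants. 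Piecing these Ext-vanishings together along the composition series of $\M$ produces the splitting, after which $V^{=1}$ has composition factors drawn only from options (a), (b), and (c), completing the argument.
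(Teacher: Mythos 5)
Your proposal is correct and follows essentially the same route as the paper's proof: decompose the monodromy on $\ins$ into the generalised $1$-eigenspace and its complement, use the vanishing of $\operatorname{Ext}^1$ between the nontrivial-monodromy part and modules supported at the punctures (or with unipotent monodromy) to split $\M$, and then apply the exact equivalence $\G(X,-)$ to transfer the splitting and the composition series to $\slt$-representations. If anything, your write-up is slightly more explicit than the paper's about why $(j_{0\infty})_!(\mathcal{N}) \to (j_{0\infty})_+(\mathcal{N})$ is an isomorphism and which Ext groups must vanish, but the underlying argument is the same.
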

 \begin{proof}
    Let $\M$ be a holonomic $\Dx$-module with exactly two singularities and assume w.l.o.g. that these lie at the points $0 = [1:0]$ and $\infty = [0:1]$. Via the Riemann--Hilbert correspondence, the restriction of $\M$ to $\ins$ corresponds to a local system $\Ll$ on $\ins$, i.e. a monodromy representation $$\rho : \pi_1(\ins) \cong \mathbb{Z} \longrightarrow \GL(\mathbb{C}^m)$$ generated by some $\rho(1) = T \in \GL(\mathbb{C}^m)$. Fix a point $x \in \ins$ and a stalk $\Ll_x$; now $T$ acts on $\Ll_x$ as monodromy (see Section 2). As locally constant sheaves are fixed by their sections, we may extend this action to all sections. Using Jordan decomposition of the matrix $T$, we may write $$\mathcal{L} = \mathcal{L}^{{\neq 1}} \oplus \mathcal{L}^{{=1}}$$ where $\mathcal{L}^{{=1}}$ is the generalised eigenspace of $T$ corresponding to the eigenvector $1$ and $\mathcal{L}^{{\neq 1}}$ is the direct sum of the other generalised eigenspaces. Thus, any (simple) quotient in the composition series of $\M$ (given in Theorem \ref{comp series} in the untwisted case, and via Lemma \ref{equivtwist} also in the case of a twist by integral $\lambda$) whose support is not contained in $\{0\}$ or $\{\infty\}$ must correspond to a local system contained in exactly one of $\Ll^{{\neq 1}}$ or $\Ll^{{=1}}$. Observe that $\operatorname{Ext}^1(\mathcal{N},\mathcal{N}') = 0$ if $\N$ is a $\D$-module with nontrivial monodromy and $\mathcal{N}'$ is a $\D$-module supported at $0$ or $\infty$. Passing again through the Riemann--Hilbert correspondence (the de Rham functor must be exact as it is an equivalence), we obtain a decomposition
    \begin{equation}
        \M = \M^{{\neq 1}} \oplus \M^{{=1}} \label{dirsum}
    \end{equation}
     where $\M^{{\neq 1}}$ has a composition series whose quotients are the minimal extensions of $\Oll$-coherent $\Dl$-modules on $\ins$ with non-trivial monodromy, and $\M^{{=1}}$ has a composition series whose quotients are the $\Oll$-coherent $\Dl$-modules on $\ins$ with trivial monodromy as well as the modules supported on the closed strata.
    Taking global sections yields $$V = \G(X, \M) \cong \G(X, \M^{{\neq 1}}) \oplus \G(X, \M^{{=1}})$$ 
    as $\G(X, -)$ is an exact functor by Beilinson--Bernstein localisation. Taking $V^{{=1}} = \G(X, \M^{{\neq 1}})$ and $V^{{\neq 1}} = \G(X, \M^{{\neq 1}})$ gives the postulated decomposition. Moreover, taking global sections of the composition series of $\M^{{=1}}$, we obtain a composition series of $\slt$-representations $$0 \leq \G(X, \M_n) \leq \G(X, \M_{n-1}) \leq \ldots \leq \G(X, \M_{1}) = \G(X, \M^{{=1}}) = V^{{=1}}$$ whose quotients are the representations corresponding to the (simple) minimal extensions of $\Oc$-coherent $\D$-modules on the strata $\ins$ or ${0}$ or $\{\infty\}$, or a union thereof, and those on $\ins$ must have trivial monodromy. As argued above, the minimal extension of an $\Oll$-coherent $\Dl$-module on $\ins$ whereon $T$ acts as identically must be a direct sum of modules of the form (a) or (c). The representations arising from $\D$-modules on the strata $\{0\}$, $U_0$ and $U_\infty$ must be of the forms (a) or (c) using the same reasoning as in Theorem \ref{onesing}, and for the stratum $\{\infty\}$ it must be of the form (b), by the argument above.
 \end{proof}

\begin{figure}
	\centering
	\[
	\begin{tikzcd}
		{\ldots \, \,} \arrow[r, "-2-\lambda-\tau", color = Red, bend left = 15]&
		z^{-2} \arrow[loop above, "-(\lambda+4)", color = Cyan] 
			\arrow[l, "\tau + 2", color = Green , bend left = 15]
			\arrow[r, "-1-\lambda-\tau", color = Red, bend left = 15]&
		z^{-1} \arrow[loop above, "-(\lambda+2)", color = Cyan]
			\arrow[l, "\tau + 1", color = Green , bend left = 15]
			\arrow[r, "-\lambda-\tau", color = Red, bend left = 15] &
		z^0 \arrow[loop above, "-\lambda", color = Cyan]
			\arrow[l, "\tau", color = Green , bend left = 15]
			\arrow[r, "1-\lambda-\tau", color = Red, bend left = 15]&
		z^{1} \arrow[loop above, "2-\lambda", color = Cyan] 
			\arrow[l, "\tau - 1", color = Green , bend left = 15]
			\arrow[r, "2-\lambda-\tau", color = Red, bend left = 15]&
		z^{2} \arrow[loop above, "4-\lambda", color = Cyan]
			\arrow[l, "\tau - 2", color = Green , bend left = 15]
			\arrow[r, "3-\lambda-\tau", color = Red, bend left = 15] &
		{\ldots \, \,} \arrow[l, "\tau - 3", color = Green , bend left = 15]
	\end{tikzcd}
	\]
	\caption{The $\slt$-module structure on the vector space of global sections $\G(X, (j_{0\infty})_+(\Oc_{\ins})) \cong \mathbb{C}[z,z^{-1}]$. Coloured arrows correspond to the action of the $\slt$-basis \textcolor{Red}{$E$},\textcolor{Green}{$F$} and \textcolor{Cyan}{$H$} with multiplicity written as the arrows' labels.}
	\label{fig:Zl}
\end{figure}

\section{Further regular singularities}

	In this last section, we look at some examples of the many $\slt$-representations which arise from holonomic $\D$-modules on $X = \mathbb{CP}^1$ with more than two regular singularities. In general, this category is very large. Indeed---as we saw in Section 2---given a finite set $S = \{s_1, \ldots, s_m\}$ of points on $X$, any finite-dimensional complex $\pi_1(X \backslash S)$-representation induces a local system on $X \backslash S$ of finite rank. Via the Riemann--Hilbert correspondence, this in turn gives rise to $\Dx$-modules with (regular) singularities in $S$. In other words, we may construct $\D$-modules on $X$ with any prescribed monodromy around the points in $S$. 
	Let us discuss this in some more detail.
	
	As $X \backslash S$ is homeomorphic to the complex plane punctured at $|S|-1$ points, it is homotopy equivalent to the wedge sum of $|S| - 1$ copies of the circle $S^1$, so in particular
	$$\pi_1(X \backslash S) = \, \stackbin[{i=1}]{|S| -1} {\bigstar[-2.5pt]}\mathbb{Z}$$	is the free group on $|S| - 1$ generators. A representation of a free product on a set of groups $\{G_i\}_{i \in \mathcal{I}}$ consists of a vector space $V$ %for some positive integer $n$ 
	%% Note: It is better to define representations either for V or for C^n but not both simultaneously
	and a set of actions $\{G_i \to \operatorname{GL}(V)\}_{i \in \mathcal{I}}$ which are independent of one another. Consequently, the data of a finite-dimensional $\pi_1(X \backslash S)$-representation is equivalent to a choice of positive integer $n$ and invertible linear transformations $T_i \in \operatorname{GL}(\mathbb{C},n)$ for each positive integer $i \leq |S|-1$. This gives rise to an $\mathcal{O}_{X\setminus S}$-coherent $\mathcal{D}_{X\setminus S}$-module, or equivalently, a locally free $\mathcal{O}_{X\setminus S}$-module with a flat connection $\nabla$.  If $S$ is nonempty, this is necessarily a free module (a trivial vector bundle); note that if $S$ is empty then we have a local system on all of $X=\mathbb{P}^1$ which is necessarily trivial. So in either case the representation gives rise to a flat connection on $\mathcal{O}_{X \setminus S}^n$. In a punctured complex disk about each point $s_j$, $\nabla$ must be gauge equivalent (via a holomorphic gauge equivalence) to $d-\frac{A_j}{z-s_j}$, for some matrix $A_j$ such that $T_j$ is conjugate to $\exp(2\pi i \cdot A_j)$.
	%are conjugate matrices. 
	It is tempting then to try to write 
%	$$\M = j_+(\Oc^{\oplus n})$$ where $j : X \backslash S \hookrightarrow X$ is the open immersion, equipped with some connection $\nabla$ which encodes the prescribed monodromy action. Namely, on some neighbourhood of each $s_i$, $\nabla$ acts as $\frac{A_i}{z-s_i}$ for a matrix $A_i$ such that $T_i \sim \exp(2\pi i \cdot A_i)$ are conjugate matrices. We can then write
%% It is better to define first a D-module on X\S, then push it forward to X and there are different ways to take this pushforward (as we are using a holonomic D-module).
	\begin{equation}\label{connection_sum}
		\nabla = d - \sum_{i=1}^{|S| -1} \frac{A_i}{z - s_i} - p(z)dz
	\end{equation} for some (matrix-valued) polynomial $p(z)$. 
	However, note that \eqref{connection_sum} need not actually be gauge equivalent to $d-\frac{A_j}{z-s_j}$ in a (punctured) complex disc about $s_j$, even if $p(z)=0$:  in fact $\exp(2 \pi i A_j)$ need only be in the \emph{closure} of the conjugacy class of the monodromy of \eqref{connection_sum} about $s_j$. This can be seen by taking a limit of contour integrals which begin with a base point of the fundamental group, take a path to a point very close to $s_j$, take a small loop counterclockwise, and then take the inverse of the first path, as the small loop shrinks to $s_j$. 	(We remark that none of these issues arise in the case $n=1$: here, the monodromy of (5.1) about $s_j$ really is $\exp(2 \pi i A_j)$, for all $j$.)
	Nonetheless, it follows from the Riemann--Hilbert theorem that, for arbitrary $n$, every monodromy representation $(T_1,\ldots, T_{|S|-1})$ can be realised as a connection of the form \eqref{connection_sum} with a regular singularity at infinity, for some choices of the $A_j$ and $p(z)$ (with $\exp(2 \pi i A_j)$ in the closure of the conjugacy class of $T_j$ for all $j$). 
	
	Note that due to the presence of the $p(z)$ term, connections of the form \eqref{connection_sum} need not have a regular singularity at infinity, in general.  It is therefore natural to restrict to those having the property $p(z)=0$, which has a simple pole at infinity. These connections are called \emph{Fuchsian}.
	%	A connection of the form (\ref{connection_sum}) with $p(z) = 0$ is called \textit{Fuchsian}. 
	Hilbert's 21st problem asked whether every connection is gauge equivalent to a Fuchsian one; its answer was eventually found to be negative\footnote{A counterexample was first constructed in \cite{Bolibrukh_90}. In  \cite{Kostov}, it is shown that, assuming $|S| \geq 4$ and either $n = 3$ or $n \geq 7$, the codimension within $\operatorname{GL}(\mathbb{C},n)^{|S|-1}$ of the variety of monodromy groups which cannot be realised by a Fuchsian connection is $2(n-1)(|S|-1)$.}. However, we shall now restrict our attention to Fuchsian connections to avoid the possibility of an irregular singularity at $\infty$. If we are interested in irreducible connections---needed to form irreducible $\D$-modules and hence irreducible $\slt$-representations---we can make such an assumption without loss of generality: that irreducible connections on $X = \mathbb{CP}^1$ are always Fuchsian has been shown in \cite{Kostov} (Theorem 1) and independently in \cite{Bolibrukh_92} (Theorem 1).

	The connection $\nabla$ has a singularity at a point $s_j \in S$ iff there is \textit{no} local trivialisation of $\nabla$ in a punctured complex disc $U$ containing $s_j$ which is holomorphic at $s_j$. Different trivialisations are related to one another by gauge transformations, i.e. conjugation by $g$ for some  $g: U\setminus{s_j} \to \GL(\mathbb{C},n)$, 
%	$g : X\setminus S \to \GL(\mathbb{C}^n)$, 
  so it could happen that, even if $A_j$ is nonzero, $\nabla|_{U\setminus{s_j}}$ turns out to be holomorphic after it is conjugated by some such $g$.  (Thanks to the Riemann--Hilbert theorem, this is equivalent to $\nabla|_{X \setminus S}$ being gauge equivalent on $X\setminus S$ to a connection with no pole at $s_j$.)  Note that, for $\nabla$ of the form \eqref{connection_sum}, this could only happen if $A_j$ is a diagonalisable matrix with integral eigenvalues, i.e.,  $\exp(2 \pi i A_j) = I$. However, even in the latter case, it could still happen that there is a singularity at $s_j$ with unipotent monodromy (so that the identity is in the closure of its conjugacy class).  In the case $n=1$, this is a biconditional (any unipotent one-by-one matrix is the identity).
	
	Let us compute a concrete example of such a representation. We will consider the “simplest" case of three regular singularities; as we shall see, already this case harbours quite a degree of complexity. Write $S = \{0, 1, \infty\}$ where $0 = [0:1]$, $\infty = [1:0]$ and $1 = [1:1]$ and $U = X \backslash S$. Denote by $j : U \hookrightarrow X$ the open inclusion. Assume for the sake of simplicity that we are dealing with the trivial twist $\lambda = 1$ and choose two invertible linear transformations of dimension 1, i.e. non-zero complex scalars $\alpha, \beta \in \mathbb{C}^\times$. Define the $\Ox$-coherent $\Dx$-module $\M_{\alpha, \beta}= j_+(\Oc_U)$ with connection \begin{equation*}
		\nabla_{\alpha, \beta} = d - \frac{\alpha}{z} - \frac{\beta}{z-1} % \label{conn}
	\end{equation*} whose regular singularities lie within $S$. Specifically, $\M_{\alpha, \beta}$ has (regular) singularities at the points $0$ and $\infty$ if and only if $\alpha \notin \mathbb{Z}$ and it has a (regular) singularity at the point $1$ if and only if $\beta \notin \mathbb{Z}$.
	
	Since the support of $\M_{\alpha, \beta}$ is contained entirely in $U_0$ (in fact, it is contained entirely in $\ins$, so we could use either chart at will), we find that $$\G(X, \M_{\alpha, \beta}) \cong \G(U_0, \M_{\alpha, \beta})$$ so the $\slt$-representation corresponding to $\M_{\alpha , \beta}$ via Beilinson--Bernstein localisation is given by the local sections $\G(U_0, \M_{\alpha, \beta})$ together under the action specified in (\ref{infinitesimal}). We illustrate this representation in Figures \ref{fig:large1}--\ref{fig:large4}; in the interest of clarity, we split this figure into four sub-figures---one for the action of each of the three basis elements $E,F,H$, and one for the action of the element $A_1 = \frac{1}{2}H + F$, whose significance we explain below. To facilitate
	%with
	these arduous computations, we have devised a computer program for the computation of the representations that arise as local sections of $\Oc$-coherent $\D$-modules. We invite the reader to the appendix for an exposition of this program.
	
	Let us remark a few features of this representation. Consider the Cauchy-Euler operators $(z-y)\p_z$ and $(w-x) \p_w$ on the charts $U_0$ and $U_\infty$, respectively, centred at a point $p$ where $p = [1:y]$ or $p = [x:1]$, respectively. We can write the image of this operator under the Beilinson--Bernstein map $\Phi_1$ in terms of our fixed $\slt$-basis. Equation (\ref{infinitesimal}) yields $$A_p = z \p^z = \tfrac{1}{2}H_0 - y\cdot F_0$$ on the chart $U_0$, while equation (\ref{infinity}) yields $$A_p = w\p^w = -\tfrac{1}{2}H_\infty + x \cdot E_\infty $$ on the chart $U_\infty$. The action of this operator on the $\g$-representation $\G(X, \M)$ corresponding to a $\D$-module $\M$ is particularly interesting in the case where $p$ is a singularity of $\M$; $A_p$ then corresponds to a shift of the grading (by degree), by the order of the singularity at $p$. Indeed, if $\M$ has a pole of order $m$ at $p$ then \begin{equation}
		A_p \cdot \G(X, \Ox) \subseteq z^{m-1}\G(X, \Ox) \label{Euler}
	\end{equation} holds. In particular, $p$ is a singularity iff $A_p$ does not increase the degree of any regular function, and the singularity is regular iff $A_p$ preserves the degree of regular functions.
	
	In the case of the representation in Figures \ref{fig:large1}--\ref{fig:large4}, we are predominately interested in the Cauchy-Euler operators $$A_0 = A_\infty = \tfrac{1}{2}H \qquad \text{and} \qquad A_1 = \tfrac{1}{2}H + F$$ since the $\Dx$-module $\M_{\alpha, \beta}$ has singularities at the points $0 = [0:1], \infty = [1:0]$ and $1 = [1:1]$. The action of $A_0 \sim H$ is seen in Figure \ref{fig:large3}. Observe that the submodule generated by the basis element $\frac{1}{z-1}$ is connected to the remainder of the representation by a series of actions with multiplicities $-2(\beta + n)$ for $n \in \mathbb{Z}_{\leq 0}$. This corresponds to the fact that for integer $\beta$, the singularity at $1$ disappears. We may say that this operator “selects" the point at $1$: $\M_{\alpha , \beta}$ has a singularity there iff the $\langle A_0 \rangle$-module is the direct sum of two submodules, one of which is entirely supported at the point $1 = [1:1]$.
	
	Similarly, the action of $A_1$ is shown in Figure \ref{fig:large4}. Here we note a similar series connecting the basis elements of the form $z^n$ for $n \in \mathbb{Z}$, with multiplicities $n - \alpha$. Again, one of these is zero if and only if $\alpha$ is an integer, which in turn corresponds to the singularity at $z = 0$ or $w = 0$ disappearing. Thus $A_1$ “selects" the points $0$ and $\infty$, and $\M_{\alpha, \beta}$ is non-singular at these points iff the $\langle A_1 \rangle$-module may be written as a direct sum of two submodules, one of which is entirely supported on $\{0, \infty\}$.
	
	It is not a coincidence that the Cauchy-Euler operator at a point corresponds to the existence of a singularity of $\M$ at that point. The notion of regular singularities is defined locally in terms of the stability of $\M$ with respect to these operators (cf. Section 1). Perhaps this observation can be generalised to all holonomic $\D$-modules, which would yield a representation-theoretic analogue of the notion of regular singularities for $\D$-modules. Such a property would enable the combination of Beilinson--Bernstein localisation and the Riemann--Hilbert correspondence, and thus, open the door for a general topological theory of $\g$-representations superseding the examples outlined in this text. 

	\begin{figure}[ht]
	\centering
	\[
	\adjustbox{angle = 90, height = 0.9\textheight}{
			\begin{tikzcd}[ampersand replacement = \&]
				... \arrow[Red,rr, "\alpha + 4"] \arrow[Red,rrrrrrrrdd, "-\beta" description, bend right] \arrow[Red,rrrr, "\beta" description, bend left=49] \arrow[Red,rrrrrr, "\beta", bend left=49] \&  \& z^{-3} \arrow[Red,rr, "-(\alpha+3)"] \arrow[Red,rrrrrrdd, "-\beta" description, bend right] \arrow[Red,rrrr, "\beta" description, bend left=49] \&  \& z^{-2} \arrow[Red,rr, "-(\alpha+2)"] \arrow[Red,rrrrdd, "-\beta" description, bend right] \&  \& z^{-1} \arrow[Red,rrdd, "-\beta", bend right, shift left] \arrow[Red,rr, "-(\alpha+\beta+1)"] \&  \& 1 \arrow[Red,dd, "\beta"] \arrow[Red,rr, "-(\alpha + \beta)"]                                                                                                                             \&  \& z \arrow[Red,ll, "-\beta", shift left] \arrow[Red,lldd, "-\beta"] \arrow[Red,"-\beta"', loop, distance=2em, in=305, out=235] \arrow[Red,rr, "-(\alpha + \beta -1)"] \&  \& z^2 \arrow[Red,ll, "-\beta", shift left] \arrow[Red,lllldd, "-\beta", bend left] \arrow[Red,"-\beta"', loop, distance=2em, in=305, out=235] \arrow[Red,rr, "-(\alpha + \beta -2)"] \arrow[Red,llll, "-\beta" description, bend right=49] \&  \& z^3 \arrow[Red,ll, "-\beta", shift left] \arrow[Red,lllllldd, "-\beta", bend left] \arrow[Red,"-\beta"', loop, distance=2em, in=305, out=235] \arrow[Red,rr, "-(\alpha + \beta -3)"] \arrow[Red,llll, "-\beta" description, bend right=49] \arrow[Red,llllll, "-\beta" description, bend right=49] \&  \& ... \arrow[Red,ll, "-\beta", shift left] \arrow[Red,lllllllldd, "-\beta", bend left] \arrow[Red,llll, "-\beta" description, bend right=49] \arrow[Red,llllll, "-\beta" description, bend right=49] \arrow[Red,llllllll, "-\beta"', bend right=49] \\
				\&  \&                                                                                                                                      \&  \&                                                                                    \&  \&                                                                                        \&  \&                                                                                                                                                                                   \&  \&                                                                                                                                                      \&  \&                                                                                                                                                                                                                       \&  \&                                                                                                                                                                                                                                                                             \&  \&                                                                                                                                                                                                                                \\
				\&  \&                                                                                                                                      \&  \&                                                                                    \&  \&                                                                                        \&  \& (z-1)^{-1} \arrow[Red,dd, "-(\beta+1)", shift left] \arrow[Red,"-(\alpha + 2\beta + 2)" description, loop, distance=2em, in=305, out=235] \arrow[Red,uu, "-(\alpha + \beta + 1)", shift left] \&  \&                                                                                                                                                      \&  \&                                                                                                                                                                                                                       \&  \&                                                                                                                                                                                                                                                                             \&  \&                                                                                                                                                                                                                                \\
				\&  \&                                                                                                                                      \&  \&                                                                                    \&  \&                                                                                        \&  \&                                                                                                                                                                                   \&  \&                                                                                                                                                      \&  \&                                                                                                                                                                                                                       \&  \&                                                                                                                                                                                                                                                                             \&  \&                                                                                                                                                                                                                                \\
				\&  \&                                                                                                                                      \&  \&                                                                                    \&  \&                                                                                        \&  \& (z-1)^{-2} \arrow[Red,dd, "-(\beta+2)", shift left] \arrow[Red,"-(\alpha + 2\beta + 4)"', loop, distance=2em, in=215, out=145] \arrow[Red,uu, "-(\alpha + \beta + 2)", shift left]            \&  \&                                                                                                                                                      \&  \&                                                                                                                                                                                                                       \&  \&                                                                                                                                                                                                                                                                             \&  \&                                                                                                                                                                                                                                \\
				\&  \&                                                                                                                                      \&  \&                                                                                    \&  \&                                                                                        \&  \&                                                                                                                                                                                   \&  \&                                                                                                                                                      \&  \&                                                                                                                                                                                                                       \&  \&                                                                                                                                                                                                                                                                             \&  \&                                                                                                                                                                                                                                \\
				\&  \&                                                                                                                                      \&  \&                                                                                    \&  \&                                                                                        \&  \& (z-1)^{-3} \arrow[Red,dd, "-(\beta+3)", shift left] \arrow[Red,"-(\alpha + 2\beta + 6)"', loop, distance=2em, in=215, out=145] \arrow[Red,uu, "-(\alpha + \beta + 3)", shift left]            \&  \&                                                                                                                                                      \&  \&                                                                                                                                                                                                                       \&  \&                                                                                                                                                                                                                                                                             \&  \&                                                                                                                                                                                                                                \\
				\&  \&                                                                                                                                      \&  \&                                                                                    \&  \&                                                                                        \&  \&                                                                                                                                                                                   \&  \&                                                                                                                                                      \&  \&                                                                                                                                                                                                                       \&  \&                                                                                                                                                                                                                                                                             \&  \&                                                                                                                                                                                                                                \\
				\&  \&                                                                                                                                      \&  \&                                                                                    \&  \&                                                                                        \&  \& \ldots \arrow[Red,uu, "-(\alpha + \beta + 4)", shift left]                                                                                                                            \&  \&                                                                                                                                                      \&  \&                                                                                                                                                                                                                       \&  \&                                                                                                                                                                                                                                                                             \&  \&                                                                                                                                                                                                                               
			\end{tikzcd}                                                                                                 
	}
	\]
	\caption{Action of the basis element $\textcolor{Red}{E} \in \mathfrak{sl}_2(\mathbb{C})$ on a fragment of the basis for the representation $\G(X,\M_{\alpha, \beta}) = \G(X, j_+(\Oc_U))$.}
	\label{fig:large1}
\end{figure}
\begin{figure}[ht]
	\centering
	\[
	\adjustbox{angle = 90, height = 0.9\textheight}{
		\begin{tikzcd}[ampersand replacement = \&]
			... \arrow[Green,rrrrrrrrdd, "\beta" description, bend right] \arrow[Green,rrrr, "-\beta" description, bend left=49] \arrow[Green,rrrrrr, "-\beta", bend left=49] \&  \& z^{-3} \arrow[Green,ll, "\alpha + 3"'] \arrow[Green,"-\beta"', loop, distance=2em, in=305, out=235] \arrow[Green,rrrrrrdd, "\beta" description, bend right] \arrow[Green,rrrr, "-\beta" description, bend left=49] \&  \& z^{-2} \arrow[Green,ll, "\alpha + 2"'] \arrow[Green,"-\beta"', loop, distance=2em, in=305, out=235] \arrow[Green,rrrrdd, "\beta" description, bend right] \&  \& z^{-1} \arrow[Green,ll, "\alpha + 1"'] \arrow[Green,"-\beta"', loop, distance=2em, in=125, out=55] \arrow[Green,rrdd, "\beta", shift left] \&  \& 1 \arrow[Green,dd, "\beta" description] \arrow[Green,ll, "\alpha"']                                                                                                                                                                                    \&  \& z \arrow[Green,ll, "\alpha + \beta -1"'] \arrow[Green,lldd, "\beta"] \&  \& z^2 \arrow[Green,ll, "\alpha + \beta -2"'] \arrow[Green,lllldd, "\beta", bend left] \arrow[Green,llll, "\beta" description, bend right=49] \&  \& z^3 \arrow[Green,ll, "\alpha + \beta -3"'] \arrow[Green,lllllldd, "\beta", bend left] \arrow[Green,llll, "\beta" description, bend right=49] \arrow[Green,llllll, "\beta" description, bend right=49] \&  \& ... \arrow[Green,ll, "\alpha + \beta -4"'] \arrow[Green,lllllllldd, "\beta", bend left] \arrow[Green,llll, "\beta" description, bend right=49] \arrow[Green,llllll, "\beta" description, bend right=49] \arrow[Green,llllllll, "\beta"', bend right=49] \\
			\&  \&                                                                                                                                                                                             \&  \&                                                                                                                                           \&  \&                                                                                                                            \&  \&                                                                                                                                                                                                                                            \&  \&                                                            \&  \&                                                                                                                              \&  \&                                                                                                                                                                                     \&  \&                                                                                                                                                                                                                                   \\
			\&  \&                                                                                                                                                                                             \&  \&                                                                                                                                           \&  \&                                                                                                                            \&  \& (z-1)^{-1} \arrow[Green,dd, "\beta+1", shift left] \arrow[Green,"\alpha" description, loop, distance=2em, in=305, out=235] \arrow[Green,lluu, "-\alpha", shift left]                                                                                         \&  \&                                                            \&  \&                                                                                                                              \&  \&                                                                                                                                                                                     \&  \&                                                                                                                                                                                                                                   \\
			\&  \&                                                                                                                                                                                             \&  \&                                                                                                                                           \&  \&                                                                                                                            \&  \&                                                                                                                                                                                                                                            \&  \&                                                            \&  \&                                                                                                                              \&  \&                                                                                                                                                                                     \&  \&                                                                                                                                                                                                                                   \\
			\&  \&                                                                                                                                                                                             \&  \&                                                                                                                                           \&  \&                                                                                                                            \&  \& (z-1)^{-2} \arrow[Green,dd, "\beta+2", shift left] \arrow[Green,"\alpha"', loop, distance=2em, in=215, out=145] \arrow[Green,lluuuu, "-\alpha" description, bend left] \arrow[Green,uu, "-\alpha", shift left]                                                     \&  \&                                                            \&  \&                                                                                                                              \&  \&                                                                                                                                                                                     \&  \&                                                                                                                                                                                                                                   \\
			\&  \&                                                                                                                                                                                             \&  \&                                                                                                                                           \&  \&                                                                                                                            \&  \&                                                                                                                                                                                                                                            \&  \&                                                            \&  \&                                                                                                                              \&  \&                                                                                                                                                                                     \&  \&                                                                                                                                                                                                                                   \\
			\&  \&                                                                                                                                                                                             \&  \&                                                                                                                                           \&  \&                                                                                                                            \&  \& (z-1)^{-3} \arrow[Green,dd, "\beta+3", shift left] \arrow[Green,"\alpha"', loop, distance=2em, in=215, out=145] \arrow[Green,lluuuuuu, "-\alpha" description, bend left] \arrow[Green,uu, "-\alpha", shift left] \arrow[Green,uuuu, "\alpha" description, bend right=49] \&  \&                                                            \&  \&                                                                                                                              \&  \&                                                                                                                                                                                     \&  \&                                                                                                                                                                                                                                   \\
			\&  \&                                                                                                                                                                                             \&  \&                                                                                                                                           \&  \&                                                                                                                            \&  \&                                                                                                                                                                                                                                            \&  \&                                                            \&  \&                                                                                                                              \&  \&                                                                                                                                                                                     \&  \&                                                                                                                                                                                                                                   \\
			\&  \&                                                                                                                                                                                             \&  \&                                                                                                                                           \&  \&                                                                                                                            \&  \& \ldots \arrow[Green,lluuuuuuuu, "-\alpha" description, bend left] \arrow[Green,uu, "-\alpha", shift left] \arrow[Green,uuuu, "\alpha" description, bend right=49] \arrow[Green,uuuuuu, "-\alpha"', bend right=49]                                                  \&  \&                                                            \&  \&                                                                                                                              \&  \&                                                                                                                                                                                     \&  \&                                                                                                                                                                                                                                  
		\end{tikzcd}
	}
	\]
	\caption{Action of the basis element $\textcolor{Green}{F} \in \mathfrak{sl}_2(\mathbb{C})$ on a fragment of the basis for the representation $\G(X,\M_{\alpha, \beta}) = \G(X, j_+(\Oc_U))$.}
    \label{fig:large2}
\end{figure}

\begin{figure}[ht]
	\centering
	\[
	\adjustbox{angle = 90, height = 0.9\textheight}{
	\begin{tikzcd}[ampersand replacement = \&]
		... \arrow[Cyan,rr, "2\beta"] \arrow[Cyan,rrrrrrrrdd, "-2\beta"', bend right] \arrow[Cyan,rrrr, "-2\beta" description, bend left=49] \arrow[Cyan,rrrrrr, "-2\beta", bend left=49] \&  \& z^{-3} \arrow[Cyan,rr, "2\beta"] \arrow[Cyan,"-2(\alpha + 3)"', loop, distance=2em, in=305, out=235] \arrow[Cyan,rrrrrrdd, "-2\beta"', bend right] \arrow[Cyan,rrrr, "-2\beta" description, bend left=49] \&  \& z^{-2} \arrow[Cyan,rr, "2\beta"] \arrow[Cyan,"-2(\alpha + 2)"', loop, distance=2em, in=305, out=235] \arrow[Cyan,rrrrdd, "-2\beta"', bend right] \&  \& z^{-1} \arrow[Cyan,"-2(\alpha + 1)"', loop, distance=2em, in=305, out=235] \arrow[Cyan,rrdd, "-2\beta"'] \&  \& 1 \arrow[Cyan,dd, "-2\beta" description] \arrow[Cyan,"-2(\alpha+\beta)"', loop, distance=2em, in=125, out=55]          \&  \& z \arrow[Cyan,"-2(\alpha+\beta -1)"', loop, distance=2em, in=305, out=235] \arrow[Cyan,ll, "-2\beta"'] \arrow[Cyan,lldd, "-2\beta"] \&  \& z^2 \arrow[Cyan,"-2(\alpha + \beta -2)"', loop, distance=2em, in=305, out=235] \arrow[Cyan,ll, "-2\beta"'] \arrow[Cyan,lllldd, "-2\beta", bend left] \arrow[Cyan,llll, "-2\beta" description, bend right=49] \&  \& z^3 \arrow[Cyan,"-2(\alpha + \beta -3)"', loop, distance=2em, in=305, out=235] \arrow[Cyan,ll, "-2\beta"'] \arrow[Cyan,lllllldd, "-2\beta", bend left] \arrow[Cyan,llll, "-2\beta" description, bend right=49] \arrow[Cyan,llllll, "-2\beta" description, bend right=49] \&  \& ... \arrow[Cyan,ll, "-2\beta"'] \arrow[Cyan,lllllllldd, "-2\beta", bend left] \arrow[Cyan,llll, "-2\beta" description, bend right=49] \arrow[Cyan,llllll, "-2\beta" description, bend right=49] \arrow[Cyan,llllllll, "-2\beta"', bend right=49] \\
		\&  \&                                                                                                                                                                                       \&  \&                                                                                                                                   \&  \&                                                                                                \&  \&                                                                                                              \&  \&                                                                                                                      \&  \&                                                                                                                                                                                          \&  \&                                                                                                                                                                                                                                                 \&  \&                                                                                                                                                                                                                         \\
		\&  \&                                                                                                                                                                                       \&  \&                                                                                                                                   \&  \&                                                                                                \&  \& (z-1)^{-1} \arrow[Cyan,dd, "-2(\beta + 1)"] \arrow[Cyan,loop, distance=2em, in=215, out=145]                           \&  \&                                                                                                                      \&  \&                                                                                                                                                                                          \&  \&                                                                                                                                                                                                                                                 \&  \&                                                                                                                                                                                                                         \\
		\&  \&                                                                                                                                                                                       \&  \&                                                                                                                                   \&  \&                                                                                                \&  \&                                                                                                              \&  \&                                                                                                                      \&  \&                                                                                                                                                                                          \&  \&                                                                                                                                                                                                                                                 \&  \&                                                                                                                                                                                                                         \\
		\&  \&                                                                                                                                                                                       \&  \&                                                                                                                                   \&  \&                                                                                                \&  \& (z-1)^{-2} \arrow[Cyan,dd, "-2(\beta + 2)"] \arrow[Cyan,"2(\alpha + \beta + 2)"', loop, distance=2em, in=215, out=145] \&  \&                                                                                                                      \&  \&                                                                                                                                                                                          \&  \&                                                                                                                                                                                                                                                 \&  \&                                                                                                                                                                                                                         \\
		\&  \&                                                                                                                                                                                       \&  \&                                                                                                                                   \&  \&                                                                                                \&  \&                                                                                                              \&  \&                                                                                                                      \&  \&                                                                                                                                                                                          \&  \&                                                                                                                                                                                                                                                 \&  \&                                                                                                                                                                                                                         \\
		\&  \&                                                                                                                                                                                       \&  \&                                                                                                                                   \&  \&                                                                                                \&  \& (z-1)^{-3} \arrow[Cyan,dd, "-2(\beta + 3)"] \arrow[Cyan,"2(\alpha + \beta + 3)"', loop, distance=2em, in=215, out=145] \&  \&                                                                                                                      \&  \&                                                                                                                                                                                          \&  \&                                                                                                                                                                                                                                                 \&  \&                                                                                                                                                                                                                         \\
		\&  \&                                                                                                                                                                                       \&  \&                                                                                                                                   \&  \&                                                                                                \&  \&                                                                                                              \&  \&                                                                                                                      \&  \&                                                                                                                                                                                          \&  \&                                                                                                                                                                                                                                                 \&  \&                                                                                                                                                                                                                         \\
		\&  \&                                                                                                                                                                                       \&  \&                                                                                                                                   \&  \&                                                                                                \&  \& \ldots                                                                                                       \&  \&                                                                                                                      \&  \&                                                                                                                                                                                          \&  \&                                                                                                                                                                                                                                                 \&  \&                                                                                                                                                                                                                        
	\end{tikzcd}
	}
	\]
	\caption{Action of the basis element $\textcolor{Cyan}{H} \in \mathfrak{sl}_2(\mathbb{C})$ on a fragment of the basis for the representation $\G(X,\M_{\alpha, \beta}) = \G(X, j_+(\Oc_U))$.}
	\label{fig:large3}
\end{figure}
\begin{figure}[ht]
	\centering
	\[
	\adjustbox{angle = 90, height = 0.9\textheight}{
	\begin{tikzcd}[ampersand replacement = \&]
		... \&  \& z^{-3} \arrow[ll, "\alpha + 3"'] \arrow["-(\alpha + \beta + 3)"', loop, distance=2em, in=125, out=55] \&  \& z^{-2} \arrow[ll, "\alpha + 2"'] \arrow["-(\alpha + \beta + 2)"', loop, distance=2em, in=125, out=55] \&  \& z^{-1} \arrow[ll, "\alpha + 1"'] \arrow["-(\alpha + \beta + 1)"', loop, distance=2em, in=125, out=55] \&  \& 1 \arrow[ll, "\alpha"'] \arrow["-(\alpha + \beta)"', loop, distance=2em, in=125, out=55]                                                                                                \&  \& z \arrow[ll, "\alpha - 1"', shift left] \arrow["-(\alpha + \beta -1)"', loop, distance=2em, in=125, out=55] \&  \& z^2 \arrow[ll, "\alpha - 2"', shift left] \arrow["-(\alpha + \beta -2)"', loop, distance=2em, in=125, out=55] \&  \& z^3 \arrow[ll, "\alpha - 3"', shift left] \arrow["-(\alpha + \beta -3)"', loop, distance=2em, in=125, out=55] \&  \& ... \arrow[ll, "\alpha - 4"', shift left] \\
		\&  \&                                                                                                       \&  \&                                                                                                       \&  \&                                                                                                       \&  \&                                                                                                                                                                                         \&  \&                                                                                                             \&  \&                                                                                                               \&  \&                                                                                                               \&  \&                                           \\
		\&  \&                                                                                                       \&  \&                                                                                                       \&  \&                                                                                                       \&  \& (z-1)^{-1} \arrow["-(\beta + 1)"', loop, distance=2em, in=125, out=55] \arrow[lluu, "-\alpha"', bend left]                                                                              \&  \&                                                                                                             \&  \&                                                                                                               \&  \&                                                                                                               \&  \&                                           \\
		\&  \&                                                                                                       \&  \&                                                                                                       \&  \&                                                                                                       \&  \&                                                                                                                                                                                         \&  \&                                                                                                             \&  \&                                                                                                               \&  \&                                                                                                               \&  \&                                           \\
		\&  \&                                                                                                       \&  \&                                                                                                       \&  \&                                                                                                       \&  \& (z-1)^{-2} \arrow["-(\beta + 2)"', loop, distance=2em, in=215, out=145] \arrow[uu, "-\alpha", shift left] \arrow[lluuuu, "\alpha", bend left]                                           \&  \&                                                                                                             \&  \&                                                                                                               \&  \&                                                                                                               \&  \&                                           \\
		\&  \&                                                                                                       \&  \&                                                                                                       \&  \&                                                                                                       \&  \&                                                                                                                                                                                         \&  \&                                                                                                             \&  \&                                                                                                               \&  \&                                                                                                               \&  \&                                           \\
		\&  \&                                                                                                       \&  \&                                                                                                       \&  \&                                                                                                       \&  \& (z-1)^{-3} \arrow["-(\beta + 3)"', loop, distance=2em, in=215, out=145] \arrow[uu, "-\alpha", shift left] \arrow[lluuuuuu, "-\alpha", bend left] \arrow[uuuu, "\alpha"', bend right=49] \&  \&                                                                                                             \&  \&                                                                                                               \&  \&                                                                                                               \&  \&                                           \\
		\&  \&                                                                                                       \&  \&                                                                                                       \&  \&                                                                                                       \&  \&                                                                                                                                                                                         \&  \&                                                                                                             \&  \&                                                                                                               \&  \&                                                                                                               \&  \&                                           \\
		\&  \&                                                                                                       \&  \&                                                                                                       \&  \&                                                                                                       \&  \& \ldots \arrow[uu, "-\alpha", shift left] \arrow[lluuuuuuuu, "\alpha", bend left] \arrow[uuuu, "\alpha"', bend right=49]                                                                 \&  \&                                                                                                             \&  \&                                                                                                               \&  \&                                                                                                               \&  \&                                          
	\end{tikzcd}
	}
	\]
	\caption{Action of the element $A_1 = \frac{1}{2}\textcolor{Cyan}H + \textcolor{Green}{F} \in \mathfrak{sl}_2(\mathbb{C})$ on a fragment of the basis for the rep. $\G(X,\M_{\alpha, \beta}) = \G(X, j_+(\Oc_U))$.}
	\label{fig:large4}
\end{figure}
	
	\section*{Appendix: Computation of Beilinson--Bernstein localisation}
	Calculating the representations which correspond to an $\Oll$-coherent $\Dl$-module $\M$ on $X = \mathbb{CP}^1$ can be a laborious task, especially in the case where $\M$ has more than two singularities. For this reason, we created a python script which does the necessary calculations. We invite the reader to download it from: 
	\begin{center}
	    \url{https://github.com/Julek99/DModRep}
	\end{center}
	and we outline briefly in this appendix how to use it. The user has two programs at their disposal --- \texttt{compute.py} for computation and \texttt{graph.py} for graphical representation. A third script, \texttt{core.py}, is also included in the file; this script contains the actual mathematical machinery, while the others are merely user interfaces which make use of \texttt{core.py}. For this reason, it is essential that \texttt{core.py} is kept in the same folder as the other programs.
	
	Each program takes as input a connection $\nabla$ defined locally on one of the charts $U_0$ and $U_\infty$ which the user has specified. The user is then prompted to give a basis for $\M$, in terms of the coordinates $z$ on $U_0$ or $w$ on $U_\infty$. The script then calculates the action of $\slt$-elements on the local sections of $\M$ and generates LaTeX code for a graphical representation of this action.
	
	Note: To run properly, this script requires Python 3.7 and the library SymPy (cf. \cite{10.7717/peerj-cs.103}) installed.
	
	\subsubsection*{Computation}
	We illustrate the function of the program \texttt{compute.py} using the example of Whittaker modules (cf. Section 8 of \cite{romanov} or \cite{whittaker}). These are representations which correspond to the module $(j_\infty)_+(\Oc_{U_\infty})$ with connection twisted by some $\eta \in \mathbb{C}$, i.e. \begin{equation}
	   \nabla = d - \eta \tag{$\ast$} \label{eta}
	\end{equation} on $U_\infty$. As this module is fully supported in $U_\infty$, the global sections agree with the local sections on $U_\infty$, so we may use the program to compute the corresponding representation.
	
	To begin, run the python script \texttt{compute.py}, which can be done on a MacOS or Linux machine (with Python 3.X and SymPy installed) by typing $$\texttt{python3 compute.py}$$ in a terminal window which has been navigated to the folder where these scripts are saved. When prompted to specify a chart, type $$\texttt{inf}$$ to select the chart $U_\infty$ with coordinate mapping $w$. When prompted to define variables, type $$\texttt{e} \qquad \text{and} \qquad \backslash\texttt{eta}$$ for the variable name and variable description, respectively. This lets the program know that we will be using a variable $\eta$, and we may refer to it as $\texttt{e}$. When prompted to define a basis, type \texttt{w} which lets the program know that the basis we will be using is $\{w^n\}_{n = 0}^\infty$. After confirming that this is the only basis, we can input the connection. We type $\texttt{e}$ to input the connection (\ref{eta}). At this stage, the program initialises a representation ready for computations. To view the action of an element $A \in \slt$, we input $A$ as a combination of the basis elements $E,F$ and $H$. For instance, we can write $$\texttt{A = E*F + 2*H}$$ to find the action of $A = E_\infty F_\infty + 2H_\infty$. On $\texttt{n = 5}$ powers of our basis $\texttt{w}$, the program computes:
	\begin{align*}
	    A \cdot 1 &= 4 \eta w + \eta (-\eta w^2 + w (1 - \lambda)) + 2 \lambda - 2 \\
        A \cdot w^1 &= 4 \eta w^2 + w (2 \lambda - 6) + (\eta w - 1) (-\eta w^3 + w^2 (2 - \lambda)) \\
        A \cdot w^2 &= 4 \eta w^3 + w^2 (2 \lambda - 10) + (\eta w^2 - 2 w) (-\eta w^4 + w^3 (3 - \lambda))\\
        A \cdot w^3 &= 4 \eta w^4 + w^3 (2 \lambda - 14) + (\eta w^3 - 3 w^2) (-\eta w^5 + w^4 (4 - \lambda))\\
        A \cdot w^4 &= 4 \eta w^5 + w^4 (2 \lambda - 18) + (\eta w^4 - 4 w^3) (-\eta w^6 + w^5 (5 - \lambda))\\
        A \cdot w^5 &= 4 \eta w^6 + w^5 (2 \lambda - 22) + (\eta w^5 - 5 w^4) (-\eta w^7 + w^6 (6 - \lambda))
	\end{align*}
    which we invite the reader to compare with the result in Section 8 of \cite{romanov}. 

    \subsubsection*{Graphical Representation}
    Currently, graphical representation is only possible for modules with a basis generated (via taking powers) by a single element\footnote{For this reason, Figures \ref{fig:large1}--\ref{fig:large4} could not be generated using the script \texttt{graph.py}. Instead, we used the script \texttt{compute.py} to compute the $\slt$-action on this representation (which works perfectly fine on bases of this form) and then made the figure manually. Here we are indebted to the interface available at \url{https://github.com/yishn/tikzcd-editor}, which significantly facilitated the creation of such an intricate diagram.}, i.e. of the form $\{b^n\}_{n = 0}^\infty$ for some $b \in \M$. Tu illustrate the representation corresponding via Beilinson--Bernstein to an $\Oll$-coherent $\Dl$-module $\M$, run the script $\texttt{graph.py}$ and follow the same procedure as in case of computation above.
    
    The script returns the LaTeX code of a Tikz-CD figure which depicts the action of $A$ on $\G(U_i,\M)$ where $i \in \{0,\infty\}$ as chosen. In the case of $A = H$, this code compiles as Figure \ref{fig:script}, which depicts the action of this $\slt$-element on a Whittaker module. The Whittaker module corresponding to the connection (\ref{eta}) has a unique (irregular) singularity at the point $\infty$. Consequently, the Cauchy-Euler operator $A_\infty = \tfrac{1}{2}H$ at $\infty$ does not preserve the grading on this module by degree in the coordinate function $w$. 
    
    \begin{figure}
	\centering
    \[ \begin{tikzcd}
    1  \arrow[ loop above,"{\lambda - 1}"] \arrow[r,bend right = 0,"{2\eta}"] & 
    (w)^{1}  \arrow[ loop above,"{\lambda - 3}"] \arrow[r,bend right = 0,"{2\eta}"] & 
    (w)^{2}  \arrow[ loop above,"{\lambda - 5}"] \arrow[r,bend right = 0,"{2\eta}"] & 
    (w)^{3}  \arrow[ loop above,"{\lambda - 7}"] \arrow[r,bend right = 0,"{2\eta}"] & 
    (w)^{4}  \arrow[ loop above,"{\lambda - 9}"]
    \end{tikzcd}\]
	\caption{The action of $A = H$ on the Whittaker module corresponding to the connection $\nabla = d - \eta$. Figure created automatically using Beilinson--Bernstein localisation calculator.}
	\label{fig:script}
\end{figure}

	\section*{Acknowledgements}
	This article grew out of a summer research project of the undergraduate research opportunities programme (UROP) organised at Imperial College London. We’d like to thank Gabriel Ng Chong Hui for his very valuable input throughout the project, as well as his lasting involvement in the creation of the article after the end of the project. We’d also like to thank Micah Gay for her collaboration undertaken during a predecessor project. Finally, we’d like to thank Prof. Pavel Etingof for having originally suggested the question behind this article, and for his very helpful corrections regarding Section 5.
	
	\sloppy
	\printbibliography
\end{document}